\pdfoutput=1
\documentclass[11pt,a4paper,reqno]{amsart}

\usepackage[british]{babel}

\usepackage{microtype}
\usepackage[T1]{fontenc}
\usepackage{setspace}
\usepackage{amssymb}
\usepackage{amsmath}
\usepackage{amsthm}
\usepackage{enumitem}
\usepackage{mathrsfs}
\usepackage[colorlinks,citecolor=blue,urlcolor=black,linkcolor=blue,linktocpage]{hyperref}
\usepackage{mathtools}
\usepackage{xcolor}
\usepackage{xfrac}

\renewcommand{\epsilon}{\varepsilon}
\renewcommand{\phi}{\varphi}
\newcommand{\defeq}{\mathrel{\mathop:}=}

\renewcommand{\Re}{\operatorname{Re}}

\DeclareMathOperator{\Neigh}{\mathcal{U}}
\DeclareMathOperator{\PD}{P_{1}}
\DeclareMathOperator{\conv}{co}
\DeclareMathOperator{\cconv}{\overline{co}}
\DeclareMathOperator{\Ker}{Ker}
\DeclareMathOperator{\ch}{Ch}
\DeclareMathOperator{\rk}{rk}
\DeclareMathOperator{\rank}{rank}
\DeclareMathOperator{\M}{M}
\DeclareMathOperator{\lat}{L}
\DeclareMathOperator{\E}{E}
\DeclareMathOperator{\GL}{GL}
\DeclareMathOperator{\SL}{SL}
\DeclareMathOperator{\cent}{Z}
\DeclareMathOperator{\C}{\mathbb{C}}
\DeclareMathOperator{\R}{\mathbb{R}}
\DeclareMathOperator{\Q}{\mathbb{Q}}
\DeclareMathOperator{\Z}{\mathbb{Z}}
\DeclareMathOperator{\N}{\mathbb{N}}
\DeclareMathOperator{\T}{\mathbb{T}}
\DeclareMathOperator{\F}{\mathbb{F}}
\DeclareMathOperator{\diam}{diam}
\DeclareMathOperator{\U}{U}
\DeclareMathOperator{\A}{A}
\DeclareMathOperator{\Hom}{Hom}

\newtheorem{thm}{Theorem}[section]
\newtheorem{cor}[thm]{Corollary}
\newtheorem{lem}[thm]{Lemma}
\newtheorem{prop}[thm]{Proposition}
\newtheorem{problem}[thm]{Problem}

\theoremstyle{definition}

\newtheorem{remark}[thm]{Remark}

\begin{document}

\setlist{noitemsep}

\author{Friedrich Martin Schneider}
\address{F.M.S., Institute of Discrete Mathematics and Algebra, TU Bergakademie Freiberg, 09596 Freiberg, Germany}
\email{martin.schneider@math.tu-freiberg.de}
\thanks{F.M.S.~acknowledges funding by the Deutsche Forschungsgemeinschaft (DFG, German Research Foundation) -- Projektnummer 561178190}

\author{Andreas Thom}
\address{A.T., Institute of Geometry, TU Dresden, 01062 Dresden, Germany}
\email{andreas.thom@tu-dresden.de}

\title[Unitary representations and continuous geometries]{Unitary representations and von Neumann's continuous geometries}
\date{\today}

\begin{abstract} We prove that the unit group of a non-discrete irreducible, continuous ring, in the sense of John von Neumann, does not admit any non-trivial unitary representation continuous with respect to the strong operator topology. \end{abstract}

\subjclass[2020]{22A10, 22A25, 16E50, 06C20}

\keywords{Topological group, unitary representation, character, exotic group, continuous geometry, continuous ring, unit group.}

\maketitle

\allowdisplaybreaks

\tableofcontents

\section{Introduction}\label{section:introduction}

Since the advent of quantum mechanics, it has been natural to seek a mathematical framework for the logic of quantum propositions. This was John von Neumann's main original motivation for introducing \emph{continuous geometries}~\cite{VonNeumannBook}---a perspective that was prominently advocated in his influential work with Birkhoff~\cite{BvN1936}; see also~\cite{redei,KronzLupher}. Our main result shows, however, that the group of natural symmetries attached to a continuous geometry is inherently incompatible with the symmetries of Hilbert space. Thus, our result obstructs a direct implementation of von Neumann's original program of connecting the theory of continuous geometries with the usual Hilbert-space formalism of quantum theory.

Extending the classical Veblen--Young theorem, continuous geometries admit a distinguished coordinatization: they are, in a unique way, isomorphic to the lattices of principal right ideals of certain regular rings, the so-called \emph{continuous rings}~\cite{VonNeumannBook}. By another fundamental theorem of von Neumann (Theorem~\ref{theorem:rank.function}), every irreducible, continuous ring carries a unique rank function $\rk_{R}\colon R \to [0,1]$, and the associated metric $d_{R}\colon (a,b) \mapsto \rk_{R}(a-b)$ is complete. The resulting topology, the \emph{rank topology}, is discrete if and only if the ring is isomorphic to a finite-dimensional matrix algebra over some division ring. On the other hand, there exists a wealth of irreducible, continuous rings with non-discrete rank topology. The basic examples arise as rank-metric completions of inductive limits of matrix algebras over fields (Remark~\ref{remark:simple.example}). Furthermore, the class of irreducible, continuous rings is closed under metric ultraproducts---a fact which has found applications in the study of group algebras~\cite{ElekSzabo,linnell}. Another intriguing family of examples has its origin in the theory of operator algebras, namely in the work of Murray and von Neumann~\cite{MurrayVonNeumann}: the algebra of densely defined, closed, linear operators \emph{affiliated} with a $\mathrm{II}_{1}$ factor $M$ constitutes a non-discrete irreducible, continuous ring whose lattice of principal right ideals is naturally isomorphic to the projection lattice of $M$.

Studying the natural symmetries of a continuous geometry leads us to considering the unit group of its coordinate ring. More precisely, if $R$ is any irreducible, continuous ring, then the corresponding unit group \begin{displaymath}
    \GL(R) \, \defeq \, \{ a \in R \mid \exists b \in R \colon \, ab = ba = 1 \}
\end{displaymath} naturally inherits the rank topology from $R$. Indeed, since the metric $d_{R}$ restricts to a bi-invariant metric on $\GL(R)$, the latter becomes a topological group. It is this topological group that we study in the present work.

A \emph{continuous unitary representation} of a topological group $G$ on a Hilbert space~$H$ is a homomorphism from $G$ to the unitary group $\U(H)$ continuous with respect to the strong operator topology. While many topological groups, including all locally compact ones, admit a rich supply of such representations,  there do exist \emph{exotic} groups at the other end of the spectrum, i.e., topological groups not possessing any non-trivial continuous unitary representation: examples include $L^{0}$ groups over pathological submeasures~\cite{HererChristensen,SchneiderSolecki}, certain Banach--Lie groups~\cite{banaszczyk,banaszczyk2}, the group of
orientation-preserving homeomorphisms of the closed real unit interval~\cite{megrel}, the isometry group of the Urysohn space of diameter
one~\cite{Pestov07}, as well as
certain limits of finite special linear groups~\cite{CarderiThom}. Inspired by and extending the latter example, the present work identifies continuous geometries as a new source of this phenomenon.

\medskip

Our main result reads as follows.

\begin{thm}\label{theorem:exotic} Let $R$ be a non-discrete irreducible, continuous ring. Then the topological group $\GL(R)$ is exotic, i.e., it does not admit any non-trivial continuous unitary representations. \end{thm}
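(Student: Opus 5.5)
We approach Theorem~\ref{theorem:exotic} through positive definite functions, following the strategy of Carderi--Thom for limits of finite special linear groups. It suffices to show that every continuous positive definite function $\phi\colon \GL(R)\to\C$ with $\phi(1)=1$ is constant. Indeed, if $\pi$ is a non-trivial continuous unitary representation, then for a suitable unit vector $\xi$ the function $g\mapsto\langle\pi(g)\xi,\xi\rangle$ is non-constant. We further reduce to \emph{characters}, i.e.\ conjugation-invariant positive definite functions. By bi-invariance of $d_{R}$, the group $\GL(R)$ is SIN. An averaging argument then produces a non-trivial continuous character whenever a non-trivial continuous unitary representation exists. One route is to take a weak$^{*}$ cluster point of averages of $\phi$ over conjugates by amenable subgroups, namely the unit groups of dense matrix subalgebras. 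This uses that the subgroup generated by these subalgebras is dense, and that a non-trivial character remains non-trivial after the limit because of an \emph{a priori} uniform continuity estimate. Alternatively, one can use the fact that a continuous unitary representation of a SIN group which is trivial on all small invariant neighbourhoods is trivial.

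The main step is to classify continuous characters on $\GL(R)$. First, using the theory of continuous rings, we show that every element of $\GL(R)$ is a product of commutators, and indeed of elements of small rank distance from involutions or unipotents. We also show that $\GL(R)$ has a dense subgroup that is a union of copies of $\GL_{n}(K)$ embedded block-diagonally via idempotents of rank $1/n$. Next we restrict a character $\chi$ to $\GL_{n}(K)\cong\GL(eRe)$-type subgroups sitting in corners $eRe$ with $\rk(e)$ small. Continuity forces $\chi(g)\to 1$ as $\rk(g-1)\to 0$. We then exploit \emph{self-similarity}: $R\cong \M_{2}(eRe)$ with $\rk(e)=1/2$, and $g\in\GL(eRe)$ is conjugate to $g\oplus 1$. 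Combining these, we aim for the multiplicativity identity $\chi(g\oplus h)=\chi(g\oplus 1)\chi(1\oplus h)$. This identity is what Thoma-type extremal characters satisfy, and it is obtained by decomposing $\chi$ into extremal characters and using conjugation by ``shift'' elements with rank tending to zero. Iterating this, we write an element $g$ that agrees with $1$ outside a corner of rank $t$ as a product of $m$ conjugate pieces, each of rank $t/m$. Then $\chi(g)=\chi(g_{m})^{m}$ with $\rk(g_{m}-1)\le t/m$, so $\chi(g)=\exp(-c\,t)$-type behaviour holds for $g$ in fixed conjugacy data. Now take a non-central $g$ and compare $g$ with $g\cdot h g h^{-1}$ for involutions $h$. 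Since $|\chi|\le 1$ and a commutator of small-rank elements is conjugate to a product of two such elements, we obtain $\chi\equiv 1$ on commutators. By perfectness (from the first step), $\chi\equiv 1$.

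We expect the hardest point to be the multiplicativity of extremal characters in this non-locally-compact, possibly non-commutative-division-ring setting. The plan there is as follows. Represent $\chi$ via GNS, take the commutant, and use that the conjugation by an element swapping two orthogonal corners of equal rank acts on the GNS space. Letting the corners shrink, we argue through a Ces\`aro/ergodic averaging over permutations of $2^{k}$ equal corners, which form finite symmetric subgroups, that the vector splits as a tensor product. Non-discreteness of the rank topology is essential exactly here, since it guarantees arbitrarily fine orthogonal decompositions of $1$ into equivalent idempotents.
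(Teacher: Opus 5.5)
\textbf{There is a genuine gap: your reduction to a non-trivial character does not work.}

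Averaging $\phi$ over conjugates can at best put \emph{some} character into $\cconv(\phi^{G})$. A uniform continuity estimate such as Lemma~\ref{lemma:continuity} only shows that the limit is again continuous and positive-definite. It does not show that the limit differs from $1$, and in general it does not. Take the finitary symmetric group, discrete and hence SIN, acting on $\ell^{2}(\N)$, with $\phi(g)=\langle \pi(g)\delta_{1},\delta_{1}\rangle$. Averaging over the finite subgroups $S_{n}$ gives $g\mapsto 1-\vert\mathrm{supp}(g)\vert/n\to 1$, although $\phi\neq 1$.

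There are two further problems with the averaging itself:
\begin{itemize}
\item The groups you average over are not amenable when the centre has characteristic zero, since $\GL_{n}(K)\supseteq\SL_{2}(\Z)$ contains free subgroups.
\item The family of all matrix subalgebras of a general $R$ is not directed, so you get no common cluster point as in Lemma~\ref{lemma:0}.
\end{itemize}
Getting even \emph{some} character into $\cconv(\phi^{H})$ for $H=\A(\Q)$ is exactly why the paper invokes the charmenability theorem of \cite{BaderBoutonnetHoudayerPeterson}, applied along the chain $\SL_{2^{i}}(\Q)$. Combined with $\ch(\A(\Q))=\{1\}$ (Kirillov, Lemma~\ref{lemma:3}), this yields $1\in\cconv(\phi^{\A(\Q)})$. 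So the averaged character is always the trivial one and by itself says nothing about $\phi$.

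\textbf{The missing idea is how to use a trivial averaged character.} Conjugation fixes central elements, so $1\in\cconv(\phi^{H})$ forces $\phi=1$ on $\cent(H)$ (Lemma~\ref{lemma:kernel}). The paper then proceeds as follows:
\begin{itemize}
\item It embeds $H=\A(\Q)$ into a proper corner $eRe$, and then into $\GL(R)$ via $a\mapsto a+1-e$. In positive characteristic it instead uses the exotic group $\A(\F_{p})$ of \cite{CarderiThom}.
\item In characteristic zero, the central elements $ae+1-e$ of $H$ then lie in $\Ker\pi$ but are not central in $\GL(R)$; in positive characteristic the embedded non-abelian $\A(\F_{p})$ lies in $\Ker\pi$.
\item The topological simplicity theorem \cite[Theorem~A]{Schneider25a} then forces $\Ker\pi=\GL(R)$.
\end{itemize}
Your proposal has no substitute for this step.

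\textbf{Your classification of characters on all of $\GL(R)$ is also not actually carried out.} The paper never needs such a classification, only $\ch(\A(F))=\{1\}$. Multiplicativity of extremal characters via Ces\`aro averaging over permutations of corners is only sketched. Even granting behaviour of the form $\chi(g)=e^{-ct}$, excluding $c>0$ needs a genuine rigidity input, such as Kirillov's theorem or Gluck's character bounds. Comparing $g$ with $g\,hgh^{-1}$, as you suggest, does not provide that input.
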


The proof of our Theorem~\ref{theorem:exotic} in finite characteristic is based on results by the second author and Carderi in \cite{CarderiThom}, which in turn are based on character estimates for unitary representations of finite simple groups by Gluck \cite{Gluck}. In case of characteristic zero we need a similarly strong input, which is the work of Bader--Boutonnet--Houdayer--Peterson \cite{BaderBoutonnetHoudayerPeterson} on charmenability of arithmetic groups, a vast generalization of Margulis' normal subgroup theorem~\cite{Margulis} and the Stuck--Zimmer stabilizer rigidity theorem~\cite{StuckZimmer}. Finally, we apply a character rigidity theorem due to Kirillov~\cite{Kirillov}, see also~\cite{PetersonThom}, as well as a topological simplicity result from the first author's recent work~\cite{Schneider25a}.

The historically first work on exoticness, due to Herer and Christensen~\cite{HererChristensen}, established this property as a possible ingredient for proving extreme amenability. A topological group $G$ is called \emph{extremely amenable} if every continuous action of $G$ on a non-empty compact Hausdorff space admits a $G$-fixed point. The topological groups considered in Theorem~\ref{theorem:exotic} are not always extremely amenable~\cite{SchneiderIMRN}, but they are known to be \emph{inert}~\cite[Corollary~1.6]{SchneiderGAFA}, i.e., whenever they act continuously on a non-empty compact Hausdorff space every single group element has a fixed point. Our Theorem~\ref{theorem:exotic} reveals another dynamical peculiarity of these objects. Recall that a topological group $G$ is said to be \emph{whirly}~\cite[Remark~5.7]{SchneiderSolecki} if, for every continuous action of $G$ on a compact Hausdorff space $X$ and every $G$-invariant regular Borel probability $\mu$ on $X$, the support of $\mu$ is contained in the set of $G$-fixed points in~$X$.

\begin{cor}\label{corollary:whirly} Let $R$ be a non-discrete irreducible, continuous ring. Then the topological group $\GL(R)$ is whirly. \end{cor}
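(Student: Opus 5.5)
The plan is to deduce Corollary~\ref{corollary:whirly} from Theorem~\ref{theorem:exotic} through the Koopman representation. This is the standard route from exoticness to whirliness; compare~\cite{SchneiderSolecki}.

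Let $G \defeq \GL(R)$ act continuously on a compact Hausdorff space $X$, and let $\mu$ be a $G$-invariant regular Borel probability measure on $X$. Consider the Koopman representation $\pi$ of $G$ on $L^{2}(X,\mu)$, given by $(\pi(g)f)(x) \defeq f(g^{-1}x)$. By invariance of $\mu$, each $\pi(g)$ is unitary. For strong continuity it suffices to check $g \mapsto \pi(g)f$ on $f \in C(X)$, because $C(X)$ is dense in $L^{2}(X,\mu)$ by regularity and the $\pi(g)$ are uniformly bounded. Continuity of the action together with compactness of $X$ gives uniform continuity: if $g \to 1$, then $\sup_{x}|f(g^{-1}x)-f(x)| \to 0$, and hence $\|\pi(g)f - f\|_{2} \to 0$. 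So $\pi$ is a continuous unitary representation, and Theorem~\ref{theorem:exotic} says it is trivial. Consequently, for every $f \in C(X)$ and $g \in G$ we have $f \circ g^{-1} = f$ $\mu$-almost everywhere.

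The remaining step is to pass from almost-everywhere invariance to pointwise fixedness on the support. Fix $g \in G$ and $f \in C(X)$. The function $x \mapsto |f(g^{-1}x) - f(x)|$ is continuous and vanishes $\mu$-almost everywhere. A continuous function that is non-zero at some point of $\operatorname{supp}\mu$ is non-zero on an open neighbourhood of that point, and every such neighbourhood has positive measure. Hence the function vanishes identically on $\operatorname{supp}\mu$.

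Now let $x \in \operatorname{supp}\mu$ and suppose $g^{-1}x \neq x$. By Urysohn's lemma, since $X$ is compact Hausdorff and therefore normal, there is $f \in C(X)$ with $f(g^{-1}x) \neq f(x)$, which is a contradiction. Thus $g^{-1}x = x$ for every $g \in G$, so $\operatorname{supp}\mu$ is contained in the set of $G$-fixed points.

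The only point needing care is the strong continuity of the Koopman representation. It rests on two facts: uniform continuity of $g \mapsto f \circ g^{-1}$ in the sup-norm, which follows from joint continuity of the action and compactness via a standard tube-lemma argument, and density of $C(X)$ in $L^{2}(\mu)$. Once these are in place, everything else is immediate from Theorem~\ref{theorem:exotic}.
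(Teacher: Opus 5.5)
Your proof is correct and follows the paper's route: the paper deduces the corollary from Theorem~\ref{theorem:exotic} together with Remark~\ref{remark:whirly}, i.e.\ from triviality of the Koopman representation on $L^{2}(\mu)$. The differences are minor. The paper detects a non-fixed point of the support via $\Vert \chi_{U} - \pi(g)\chi_{U} \Vert_{2,\mu}^{2} = 2\mu(U)$ for an open $U$ with $U \cap gU = \emptyset$, whereas you use continuous test functions and Urysohn's lemma, and you also verify the strong continuity of the Koopman representation, which the paper takes for granted.
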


Combined with a recent automatic continuity result from~\cite[Theorem~1.3]{BernardSchneider}, our Theorem~\ref{theorem:exotic} has non-trivial ramifications even for the underlying abstract groups (not carrying any topology).

\begin{cor}\label{corollary:separable.reps} Let $R$ be a non-discrete irreducible, continuous ring. Every unitary representation of the group $\GL(R)$ on a separable Hilbert space is trivial. \end{cor}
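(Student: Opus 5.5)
We derive Corollary~\ref{corollary:separable.reps} from Theorem~\ref{theorem:exotic} by an automatic continuity argument. Let $\pi \colon \GL(R) \to \U(H)$ be an arbitrary homomorphism, where $H$ is a separable Hilbert space. With the strong operator topology, $\U(H)$ is a Polish group, and in particular a separable topological group. We invoke \cite[Theorem~1.3]{BernardSchneider}, which we expect to say that for a non-discrete irreducible, continuous ring $R$, every homomorphism from $\GL(R)$ into a separable topological group is continuous with respect to the rank topology. Applied to $\pi$, this shows that $\pi$ is a continuous unitary representation of the topological group $\GL(R)$.

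Theorem~\ref{theorem:exotic} then says that $\pi$ is trivial, i.e., $\pi(g) = \mathrm{id}_{H}$ for all $g \in \GL(R)$. This proves the corollary.

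The only step that needs care is checking that the hypotheses of \cite[Theorem~1.3]{BernardSchneider} hold in the present setting. First, the result must apply to the full class of non-discrete irreducible, continuous rings considered here. Second, the admissible target groups must include $\U(H)$ with the strong operator topology; it suffices that they include all separable (or all Polish) groups. If the cited theorem is instead formulated via the Steinhaus property or for homomorphisms into groups of countable pseudocharacter, the argument still goes through. Since $H$ is separable, $\U(H)$ is second countable, so it is separable and metrizable and therefore belongs to every such class. No further work beyond this verification is needed.
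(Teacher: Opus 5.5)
Your proposal is correct and follows the paper's proof: the paper likewise observes that the strong operator topology is a separable group topology on $\U(H)$. It then invokes \cite[Theorem~1.3]{BernardSchneider} to make an arbitrary homomorphism $\GL(R) \to \U(H)$ continuous, and concludes by Theorem~\ref{theorem:exotic}; your hedging about alternative formulations of the cited result is unnecessary, since the paper applies it directly to separable target groups.
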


This note is organized as follows. After recollecting some general facts about positive-definite functions in Section~\ref{section:positive.definite.functions}, we provide the relevant background on continuous rings and their unit groups in Section~\ref{section:continuous.rings}. The subsequent Section~\ref{section:charmenability} contains the central technical ingredients about characters on special linear groups needed for the proof of our main results given in Section~\ref{section:main.result}. In the final Section~\ref{section:open.problem}, we suggest an open problem for future research.

\section{Dynamics of positive-definite functions}\label{section:positive.definite.functions}

In this section, we collect some general observations related to continuous positive-definite functions on topological groups.

To clarify some notation and terminology, let $G$ be a group. Then there is a natural right action of $G$ on $\C^{G}$ given by \begin{displaymath}
    \phi^{g} \colon \, G \, \longrightarrow \, \C, \quad h \, \longmapsto \, \phi\!\left(ghg^{-1}\right) \qquad \left( \phi \in \C^{G}, \, g \in G \right),
\end{displaymath} and we define \begin{displaymath}
    \left. \Delta(G) \, \defeq \, \left\{ \phi \in \C^{G} \, \right\vert \forall g \in G \colon \, \phi^{g} = \phi \right\} .
\end{displaymath} Furthermore, for $\phi \in \C^{G}$ and a subgroup $H \leq G$, we let $\phi^{H} \defeq \{ \phi^{h} \mid h \in H \}$. Given any $A \subseteq \C^{G}$, we consider $\cconv (A) \defeq \overline{\conv (A)}$, i.e., the topological closure of the convex hull of $A$ in $\C^{G}$ with respect to the product topology. A \emph{positive-definite} function on $G$ is a map $\phi \colon G \to \C$ such that \begin{displaymath}
	\sum\nolimits_{i,j=1}^{n} c_{i}\overline{c_{j}} \phi\!\left(g_{j}^{-1}g_{i}\right)\! \, \geq \, 0 
\end{displaymath} for all $n \in \N$, $c_{1},\ldots,c_{n} \in \C$ and $g_{1},\ldots,g_{n} \in G$. A \emph{character} on $G$ is a positive-definite element of $\Delta(G)$, i.e., a positive-definite function $\chi \colon G \to \C$ such that $\chi^{g} = \chi$ for all $g \in G$. The reader is referred to~\cite[Appendix~C.4]{KazhdanBook} for further background on positive-definite functions. We confine ourselves to the following basic facts.

\begin{lem}[{\cite[Proposition~C.4.2(ii),\,(iii)]{KazhdanBook}}]\label{lemma:positive.definite.functions} Let $G$ be a group and let $\phi \colon G \to \C$ be positive-definite. Then, for all $g,h \in G$, \begin{enumerate}
    \item\label{lemma:positive.definite.functions.1} $\vert \phi(g) \vert \leq \phi(1)$, 
	\item\label{lemma:positive.definite.functions.2} $\vert \phi(g) - \phi(h) \vert^{2} \leq 2\phi(1)\!\left(\phi(1)-\Re \phi\!\left(g^{-1}h\right)\right)$.
\end{enumerate} \end{lem}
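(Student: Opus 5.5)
Both inequalities follow from the positive semidefiniteness of the Gram-type matrices $\bigl(\phi(g_j^{-1}g_i)\bigr)_{i,j}$ for small families of group elements. We first record two standard consequences of the defining condition.

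Taking $n=1$ and $g_1=1$ gives $\phi(1)\geq 0$. Taking $n=2$ with $g_1 = 1$, $g_2 = g$, positivity of the quadratic form for all $c_1,c_2$ forces the Hermitian form to be real-valued. Comparing the forms for $(c_1,c_2)=(1,1)$ and $(1,i)$ then yields $\phi(g^{-1})=\overline{\phi(g)}$. Thus the matrix
\begin{displaymath}
A=\begin{pmatrix}\phi(1) & \phi(g)\\ \overline{\phi(g)} & \phi(1)\end{pmatrix}
\end{displaymath}
is Hermitian positive semidefinite. Its determinant is therefore non-negative, i.e.\ $\phi(1)^2\ge|\phi(g)|^2$, which is~(1).

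For~(2) we use the $3\times3$ matrix built from $g_1=1$, $g_2=g$, $g_3=h$ (with entries $\phi(g_j^{-1}g_i)$). Write $a=\phi(g)$, $b=\phi(h)$ and $c=\phi(g^{-1}h)$; up to conjugation conventions these are the off-diagonal entries. If $\phi(1)=0$, then $\phi\equiv0$ by~(1) and there is nothing to prove, so assume $\phi(1)>0$. We evaluate the form at the vector $(c_1,c_2,c_3)=(\lambda,1,-1)$ and optimise over $\lambda\in\C$. The quadratic form becomes
\begin{displaymath}
|\lambda|^2\phi(1)+2\phi(1)-2\Re c+2\Re\bigl(\lambda\,\overline{(a-b)}\bigr)
\end{displaymath}
for a suitable choice of which of $a-b$ or its conjugate appears; this is determined by the same conjugation bookkeeping. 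This expression is $\geq0$. Choosing $\lambda=-t(a-b)$ with $t=1/\phi(1)$ gives
\begin{displaymath}
-|a-b|^2/\phi(1)+2\phi(1)-2\Re c\ \ge 0 .
\end{displaymath}
Multiplying by $\phi(1)$ yields exactly $|\phi(g)-\phi(h)|^2\le 2\phi(1)\bigl(\phi(1)-\Re\phi(g^{-1}h)\bigr)$.

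The main obstacle is purely bookkeeping: in~(2) we must check which entry of the matrix is $\phi(g^{-1}h)$ versus $\phi(h^{-1}g)=\overline{\phi(g^{-1}h)}$. Since only $\Re$ of it enters, this ambiguity is harmless. We must also verify that the cross term really involves $a-b$ (and not, say, $a+b$), which is fixed by the sign choice $(\lambda,1,-1)$.

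An alternative route, which avoids this bookkeeping, is the GNS construction. It gives $\phi(g)=\langle\pi(g)\xi,\xi\rangle$ for a unitary representation $\pi$ and a vector $\xi$. Then (1) is Cauchy--Schwarz. For (2),
\begin{displaymath}
|\phi(g)-\phi(h)|=\bigl|\langle \xi,(\pi(g)^*-\pi(h)^*)\xi\rangle\bigr|\le\|\xi\|\,\|\pi(g)^{-1}\xi-\pi(h)^{-1}\xi\|,
\end{displaymath}
and
\begin{displaymath}
\|\pi(g)^{-1}\xi-\pi(h)^{-1}\xi\|^2=\|\xi-\pi(g)\pi(h)^{-1}\xi\|^2=2\phi(1)-2\Re\phi(gh^{-1}).
\end{displaymath}
With the inner product taken linear in the correct slot, the element appearing here is $g^{-1}h$ rather than $gh^{-1}$, because $\Re\phi$ is invariant under inversion. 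We would present this GNS version.
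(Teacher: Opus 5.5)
The paper does not prove this lemma; it only cites Proposition~C.4.2 of Bekka--de la Harpe--Valette. So the question is whether your argument is a sound version of the standard one. Your matrix argument is a complete proof. The $2\times 2$ step is fine, and the bookkeeping you leave open in (2) works out with no ambiguity. Take $M_{ij}=\phi(g_{j}^{-1}g_{i})$ for $(g_{1},g_{2},g_{3})=(1,g,h)$ and use your identity $\phi(x^{-1})=\overline{\phi(x)}$. Then $M_{12}=\overline{a}$, $M_{13}=\overline{b}$, $M_{32}=c$ and $M_{23}=\overline{c}$. Hence the form at $(\lambda,1,-1)$ is exactly $|\lambda|^{2}\phi(1)+2\phi(1)-2\Re c+2\Re\bigl(\lambda\,\overline{(a-b)}\bigr)$, and $\lambda=-(a-b)/\phi(1)$ gives (2). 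You should state this computation outright rather than hedge. It needs nothing beyond the definition of positive-definiteness.

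The GNS version, which you say you would present instead, has a wrong final step. Your displayed chain proves $|\phi(g)-\phi(h)|^{2}\leq 2\phi(1)\bigl(\phi(1)-\Re\phi(gh^{-1})\bigr)$, not the stated bound. Inversion-invariance of $\Re\phi$ does not fix this: inversion sends $gh^{-1}$ to $hg^{-1}$, which is only conjugate to $g^{-1}h$, and $\phi$ is not assumed to be conjugation-invariant. For instance, if $\phi$ is the indicator function of a non-normal subgroup $K$, then $\phi(g^{-1}h)=1$ means $gK=hK$, while $\phi(gh^{-1})=1$ means $Kg=Kh$. The discrepancy comes from passing to adjoints, not from the linearity convention of the inner product. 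The $gh^{-1}$ bound is also true (apply the lemma to $\overline{\phi}$ at $g^{-1},h^{-1}$), but it is a different statement, and the paper uses the $g^{-1}h$ form in Lemma~\ref{lemma:continuity}.

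The repair is not to pass to adjoints. Use $|\langle(\pi(g)-\pi(h))\xi,\xi\rangle|\leq\lVert\pi(g)\xi-\pi(h)\xi\rVert\,\lVert\xi\rVert$ together with $\lVert\pi(g)\xi-\pi(h)\xi\rVert^{2}=\lVert\xi-\pi(g^{-1}h)\xi\rVert^{2}=2\phi(1)-2\Re\phi(g^{-1}h)$. Even after this repair, the matrix proof is the better one to present. The GNS route relies on the GNS theorem (Theorem~C.4.10 of the cited book, which the paper uses separately in Lemma~\ref{lemma:exoticness.via.positive.definite.functions}), whereas the matrix argument is self-contained.
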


For any topological group $G$, the set \begin{displaymath}
    \left. \PD(G) \, \defeq \, \left\{ \phi \in \C^{G} \, \right\vert \phi \text{ continuous, positive-definite, } \phi(1) = 1 \right\}
\end{displaymath} constitutes a $G$-invariant convex subset of $\C^{G}$.

\begin{lem}\label{lemma:exoticness.via.positive.definite.functions} Let $G$ be a topological group. Then \begin{displaymath}
    \bigcap \{ \Ker \pi \mid \pi \text{ continuous unitary representation of } G \} = \bigcap\nolimits_{\phi \in \PD(G)} \phi^{-1}(1).
\end{displaymath} In particular, $G$ is exotic if and only if $\PD(G) = \{ 1 \}$. \end{lem}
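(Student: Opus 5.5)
We prove the displayed equality by showing each side is contained in the other, using the GNS correspondence between continuous unitary representations and continuous positive-definite functions. The final assertion then follows at once. $G$ is exotic iff the left-hand side equals $G$, iff every $\phi\in\PD(G)$ takes only the value $1$, i.e.\ iff $\PD(G)=\{1\}$. Here $1$ is a constant function, which lies in $\PD(G)$ since it is the matrix coefficient of the trivial representation.

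For ``$\subseteq$'', fix $\phi\in\PD(G)$. By the GNS construction there are a unitary representation $\pi$ of $G$ on some Hilbert space $H$ and a cyclic unit vector $\xi\in H$ with $\phi(g)=\langle\pi(g)\xi,\xi\rangle$ for all $g$. The representation $\pi$ is strongly continuous. Indeed, for each $h$ we have
\[
\|\pi(g)\pi(h)\xi-\pi(h)\xi\|^{2}=2-2\Re\phi(h^{-1}gh),
\]
and the right-hand side tends to $0$ as $g\to 1$ because $\phi$ is continuous. Continuity at $1$ therefore holds on the dense span of $\pi(G)\xi$. Since the $\pi(g)$ are uniformly bounded, it extends to all of $H$, and continuity at $1$ gives continuity everywhere. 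Now if $g\in\Ker\pi$, then $\phi(g)=\langle\xi,\xi\rangle=1$. Hence the intersection of all kernels lies in $\phi^{-1}(1)$ for every $\phi\in\PD(G)$.

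For ``$\supseteq$'', suppose $g$ satisfies $\phi(g)=1$ for all $\phi\in\PD(G)$. Let $\pi$ be any continuous unitary representation and $\xi\in H$ any unit vector. Then $\phi(h)\defeq\langle\pi(h)\xi,\xi\rangle$ lies in $\PD(G)$, so
\[
\|\pi(g)\xi-\xi\|^{2}=2-2\Re\phi(g)=0.
\]
Thus $\pi(g)\xi=\xi$ for every unit vector $\xi$, so $\pi(g)=\mathrm{id}$ and $g\in\Ker\pi$.

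The only step needing any care is the strong continuity of the GNS representation. It is handled by the density argument above, or alternatively by Lemma~\ref{lemma:positive.definite.functions}\eqref{lemma:positive.definite.functions.2}, which shows that $\phi$ continuous at $1$ implies uniform continuity of the relevant matrix coefficients.
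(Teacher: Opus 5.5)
Your proof is correct and is essentially the paper's argument. The paper gets ``$\subseteq$'' from the GNS construction for continuous positive-definite functions and ``$\supseteq$'' from the fact that normalized diagonal matrix coefficients of continuous unitary representations lie in $\PD(G)$, citing both from Bekka--de la Harpe--Valette; the only difference is that you verify the strong continuity of the GNS representation by hand (your density argument is fine) instead of quoting it.
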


\begin{proof} The inclusion ($\subseteq$) follows from~\cite[Theorem~C.4.10]{KazhdanBook}, while the inclusion ($\supseteq$) is a consequence of~\cite[Proposition~C.4.3]{KazhdanBook}. \end{proof}

Given a topological group $G$, we let $\Neigh(G)$ be the neighborhood filter of the neutral element $1 \in G$. A topological group $G$ is said to have \emph{small invariant neighborhoods} \emph{(SIN)} if $\Neigh(G)$ admits a filter base consisting of conjugation-invariant sets, i.e., \begin{displaymath}
    \forall U \in \Neigh(G) \ \exists V \in \Neigh(G) \colon \qquad (V \subseteq U) \, \wedge \, \left(\forall g \in G \colon \, gVg^{-1} = V\right) .
\end{displaymath} For instance, if a metric $d$ on a group $G$ is \emph{bi-invariant}, in the sense that $d(gx,gy) = d(x,y) = d(xg,yg)$ for all $g,x,y \in G$, then $G$ endowed with the topology generated by $d$ constitutes a topological group with SIN.

\begin{lem}\label{lemma:continuity} Let $G$ be a topological group with SIN. If $\phi \in \PD(G)$, then \begin{displaymath}
    \cconv \!\left(\phi^{G}\right) \, \subseteq \, \PD(G) .
\end{displaymath} \end{lem}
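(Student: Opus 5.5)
We have to show that every element $\psi$ of $\cconv(\phi^{G})$ is continuous, positive-definite, and satisfies $\psi(1)=1$. The two algebraic properties are closed under convex combinations and pointwise limits, so we handle them first. Continuity is the real content: pointwise limits of continuous functions need not be continuous. We will obtain it from an equicontinuity estimate that is uniform over the orbit $\phi^{G}$, and this is exactly where SIN enters.

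\emph{Algebraic part.} For each $g\in G$, the function $\phi^{g}$ is positive-definite: in the defining sum, replace $g_i$ by $gg_ig^{-1}$, noting that $g(g_j^{-1}g_i)g^{-1}=(gg_jg^{-1})^{-1}(gg_ig^{-1})$. Also $\phi^{g}(1)=\phi(1)=1$. Both the positive-definiteness inequalities (each involves finitely many evaluations) and the normalisation $\psi(1)=1$ are preserved under convex combinations and under limits in the product topology of $\C^{G}$. Hence every $\psi\in\cconv(\phi^{G})$ is positive-definite with $\psi(1)=1$.

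\emph{Continuity.} Let $\epsilon>0$. Since $\phi$ is continuous at $1$ with $\phi(1)=1$, there is $U\in\Neigh(G)$ with $\vert 1-\phi(u)\vert<\epsilon$ for all $u\in U$. By SIN, choose a conjugation-invariant $V\in\Neigh(G)$ with $V\subseteq U$. Then for every $g\in G$ and $v\in V$ we have $gvg^{-1}\in V$, so
\[
1-\Re\phi^{g}(v)=1-\Re\phi\!\left(gvg^{-1}\right)<\epsilon .
\]
The function $\phi\mapsto 1-\Re\phi(v)$ is affine and continuous for the product topology. It follows that $1-\Re\psi(v)\leq\epsilon$ for all $\psi\in\cconv(\phi^{G})$ and all $v\in V$.

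Now take $x\in G$ and $y\in xV$, so that $x^{-1}y\in V$. Lemma~\ref{lemma:positive.definite.functions}\eqref{lemma:positive.definite.functions.2}, applied to $\psi$ (which is positive-definite with $\psi(1)=1$ by the algebraic part), gives
\[
\vert\psi(x)-\psi(y)\vert^{2}\leq 2\bigl(1-\Re\psi(x^{-1}y)\bigr)\leq 2\epsilon .
\]
Since $xV$ is a neighbourhood of $x$, this shows that $\psi$ is continuous at every $x\in G$. In fact the family $\cconv(\phi^{G})$ is uniformly equicontinuous.

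\emph{Main point.} The only delicate step is securing a neighbourhood on which the bound $1-\Re\phi^{g}<\epsilon$ holds uniformly in $g$. This is precisely what conjugation-invariance of $V$ provides. Everything else is passage to convex combinations and pointwise limits, together with Lemma~\ref{lemma:positive.definite.functions}.
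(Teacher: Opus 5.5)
Your proof is correct and follows essentially the paper's route. The algebraic properties pass to $\cconv(\phi^{G})$ because they define a closed convex $G$-invariant set. Continuity comes from a conjugation-invariant neighbourhood supplied by SIN, combined with Lemma~\ref{lemma:positive.definite.functions}\ref{lemma:positive.definite.functions.2}, and this gives uniform equicontinuity.

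The only minor difference is the order of steps. You pass the one-point bound $1-\Re\psi\leq\epsilon$ on $V$ to the closed convex hull via a closed half-space, and then apply the lemma to $\psi$ itself, using its already-established positive-definiteness. The paper instead applies the lemma to each conjugate $\phi^{g}$ and transfers the two-point estimate to $\psi$ by an explicit $\epsilon/3$ approximation through convex combinations. Your ordering is slightly more economical.
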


\begin{proof} Let $\phi \in \PD(G)$. One readily checks that \begin{displaymath}
    \left. A \, \defeq \, \left\{ \psi \in \C^{G} \, \right\vert \psi \text{ positive-definite, } \psi(1) = 1 \right\}
\end{displaymath} is a $G$-invariant closed convex subset of $\C^{G}$, whence $\cconv \!\left(\phi^{G}\right) \subseteq A$. It thus remains to verify that every element of $\cconv \!\left(\phi^{G}\right)$ is continuous. Completing the proof, we will establish the following even stronger claim: \begin{align*}
    &\forall \epsilon \in \R_{>0} \ \exists U \in \Neigh(G) \ \forall \psi \in \cconv \!\left(\phi^{G}\right) \ \forall x,y \in G \colon \\
    &\qquad \qquad x^{-1}y \in U \ \Longrightarrow \ \vert \psi(x) - \psi(y) \vert \leq \epsilon .
\end{align*} To this end, let $\epsilon \in \R_{>0}$. Since $\phi$ is continuous and $G$ has SIN, there exists $U \in \Neigh(G)$ such that $\diam \phi(U) \leq \tfrac{\epsilon^{2}}{18}$ and $gUg^{-1} = U$ for all $g \in G$. Now, if $\psi \in \cconv \!\left(\phi^{G}\right)$ and $x,y \in G$ are such that $x^{-1}y \in U$, then there exist a finite set $F \subseteq G$ and $\mu \in [0,1]^{F}$ such that $\sum_{g \in F} \mu(g) = 1$ and $\left\lvert \psi(z) - \sum\nolimits_{g \in F} \mu(g)\phi^{g}(z) \right\rvert \leq \tfrac{\epsilon}{3}$ for each $z \in \{ x,y \}$, which entails that \begin{align*}
    \vert \psi(x) - \psi(y) \vert \, &\leq \, \left\lvert \psi(x) - \sum\nolimits_{g \in F} \mu(g)\phi^{g}(x) \right\rvert \\
    & \qquad \qquad + \left\lvert \sum\nolimits_{g \in F} \mu(g)\phi^{g}(x) - \sum\nolimits_{g \in F} \mu(g)\phi^{g}(y) \right\rvert \\
    & \qquad \qquad + \left\lvert \sum\nolimits_{g \in F} \mu(g)\phi^{g}(y) - \psi(y) \right\rvert \\
    &\leq \, \tfrac{2\epsilon}{3} + \sum\nolimits_{g \in F} \mu(g)\!\left\lvert \phi\!\left(gxg^{-1}\right) - \phi\!\left( gyg^{-1}\right) \right\rvert \\
    &\stackrel{\ref{lemma:positive.definite.functions}\ref{lemma:positive.definite.functions.2}}{\leq} \, \tfrac{2\epsilon}{3} + \sum\nolimits_{g \in F} \mu(g)\sqrt{2\!\left( 1- \Re \phi\!\left( gx^{-1}yg^{-1}\right)\right)} \, \leq \, \epsilon . \qedhere
\end{align*} \end{proof}

If $G$ is a topological group, then we let \begin{displaymath}
    \ch(G) \, \defeq \, \PD(G) \cap \Delta(G) \, = \, \{ \chi \in \PD(G) \mid \forall g \in G \colon \, \chi^{g} = \chi \} .
\end{displaymath}

\begin{lem}\label{lemma:0} Let $G$ be a topological group with SIN and let $\phi \in \PD(G)$. Suppose that $G$ admits a set $\mathcal{H}$ of subgroups such that \begin{enumerate}
    \item[---\,] $(\mathcal{H},{\subseteq})$ is directed,
    \item[---\,] $G = \overline{\bigcup \mathcal{H}}$, and
    \item[---\,] $\cconv \!\left((\phi\vert_{H})^{H}\right) \cap \Delta(H) \ne \emptyset$ for each $H \in \mathcal{H}$.
\end{enumerate} Then $\cconv \!\left(\phi^{G}\right) \cap \ch(G) \ne \emptyset$. \end{lem}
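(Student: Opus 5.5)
Let $K \defeq \cconv\!\left(\phi^{G}\right)$, viewed as a subset of $\C^{G}$ with the product topology. The plan is a compactness argument over the directed family $\mathcal{H}$. Since $\vert \psi(g)\vert \leq 1$ for every $\psi \in K$ (Lemma~\ref{lemma:continuity} and Lemma~\ref{lemma:positive.definite.functions}\ref{lemma:positive.definite.functions.1}), $K$ is a closed subset of the compact space $\prod_{g \in G} \{ z \in \C \mid \vert z \vert \leq 1 \}$, hence compact. For each $H \in \mathcal{H}$ consider
\begin{displaymath}
    K_{H} \, \defeq \, \{ \psi \in K \mid \forall h \in H \colon \, \psi^{h}\vert_{H} = \psi\vert_{H} \} ,
\end{displaymath}
that is, those $\psi \in K$ whose restriction to $H$ lies in $\Delta(H)$. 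Each $K_{H}$ is closed in $K$.

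We show $K_{H} \neq \emptyset$. The restriction map $\rho_{H} \colon \C^{G} \to \C^{H}$ is continuous and affine, and $\rho_{H}(\phi^{h}) = (\phi\vert_{H})^{h}$ for $h \in H$. Hence $\rho_{H}$ maps $\cconv\!\left(\phi^{H}\right)$, which is compact, onto a compact convex set containing $(\phi\vert_{H})^{H}$, so onto a set containing $\cconv\!\left((\phi\vert_{H})^{H}\right)$. By hypothesis there is $\chi_{H} \in \cconv\!\left((\phi\vert_{H})^{H}\right) \cap \Delta(H)$. Therefore some $\psi \in \cconv\!\left(\phi^{H}\right) \subseteq K$ satisfies $\psi\vert_{H} = \chi_{H}$, and this $\psi$ lies in $K_{H}$.

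Next, if $H \subseteq H'$ then $K_{H'} \subseteq K_{H}$, because $\Delta(H')$ restricted to $H$ lands in $\Delta(H)$. Since $\mathcal{H}$ is directed, the family $(K_{H})_{H \in \mathcal{H}}$ has the finite intersection property, and compactness of $K$ gives some $\chi \in \bigcap_{H} K_{H}$. Then $\chi(hxh^{-1}) = \chi(x)$ for all $x,h$ lying in a common $H \in \mathcal{H}$. Given arbitrary $x,h \in \bigcup\mathcal{H}$, directedness puts them in a common $H$, so $\chi$ is invariant under conjugation on $\bigcup \mathcal{H}$.

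The remaining step, which I expect to be the only delicate one, is extending this invariance to all of $G$. By Lemma~\ref{lemma:continuity}, $\chi \in \PD(G)$, so $\chi$ is continuous. For fixed $h \in \bigcup \mathcal{H}$, the continuous functions $x \mapsto \chi(hxh^{-1})$ and $\chi$ agree on the dense set $\bigcup\mathcal{H}$, hence everywhere. For fixed $x \in G$, the map $g \mapsto \chi(gxg^{-1})$ is continuous and equals $\chi(x)$ on the dense set $\bigcup\mathcal{H}$, hence on all of $G$. Thus $\chi \in \Delta(G) \cap \PD(G) = \ch(G)$ and $\chi \in K$, as required. The SIN hypothesis enters only through Lemma~\ref{lemma:continuity}, which guarantees continuity of the limit $\chi$.
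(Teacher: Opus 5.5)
Your proof is correct and follows essentially the same route as the paper: compactness of $\cconv(\phi^{G})$ via Tychonoff, non-emptiness of the closed sets $K_{H}$ (the paper's $\cconv(\phi^{G}) \cap D(H)$) via the restriction map, the finite intersection property from directedness, and extension of conjugation invariance to all of $G$ by continuity (Lemma~\ref{lemma:continuity}) and density. The differences are cosmetic: you extend the invariance one variable at a time, whereas the paper uses the dense subset $(\bigcup\mathcal{H}) \times (\bigcup\mathcal{H})$ of $G \times G$, and you only use the inclusion $\cconv((\phi\vert_{H})^{H}) \subseteq \rho_{H}(\cconv(\phi^{H}))$, where the paper records equality.
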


\begin{proof} Let $\phi \in \PD(G)$. Then $\Vert \phi \Vert_{\infty} \leq 1$ by Lemma~\ref{lemma:positive.definite.functions}\ref{lemma:positive.definite.functions.1} and thus \begin{displaymath}
    {\cconv \!\left(\phi^{G}\right)} \, \subseteq \, \{ z \in \C \mid \vert z \vert \leq 1 \}^{G} ,
\end{displaymath} whence Tychonoff's theorem implies that ${\cconv \!\left(\phi^{G}\right)}$ is compact, which immediately entails compactness of the closed subset ${\cconv \!\left(\phi^{H}\right)} \subseteq {\cconv \!\left(\phi^{G}\right)}$ for every subgroup $H \leq G$. For any subgroup $H \leq G$, since \begin{displaymath}
    \Delta(H) \, = \, \bigcap\nolimits_{g,h \in H} \left. \! \left\{ \psi \in \C^{H} \, \right\vert \psi\!\left(ghg^{-1}\right) = \psi(h) \right\}
\end{displaymath} is closed in $\C^{H}$ and the map \begin{displaymath}
    T_{H} \colon \, \C^{G} \, \longrightarrow \, \C^{H}, \quad \psi \, \longmapsto \, \psi\vert_{H}
\end{displaymath} is continuous, we see that \begin{displaymath}
    D(H) \, \defeq \, T_{H}^{-1}(\Delta(H))
\end{displaymath} is closed in $\C^{G}$. Furthermore, as $(\mathcal{H},{\subseteq})$ is directed and \begin{displaymath}
    \forall H_{0},H_{1} \in \mathcal{H} \colon \qquad H_{0} \subseteq H_{1} \, \Longrightarrow \, D(H_{0}) \supseteq D(H_{1}) ,
\end{displaymath} we know that $(\{ D(H) \mid H \in \mathcal{H}\},{\supseteq})$ is directed, too. We claim that \begin{equation}\label{eq:nonempty}
    \forall H \in \mathcal{H} \colon \qquad {\cconv \!\left(\phi^{G}\right)} \cap {D(H)} \ne \emptyset .
\end{equation} To see this, consider any $H \in \mathcal{H}$. Since $T_{H}$ is continuous, linear, and $H$-equivariant, and ${\cconv \!\left(\phi^{H}\right)}$ is compact, we infer that \begin{displaymath}
    T_{H}\!\left( \cconv \!\left(\phi^{H}\right) \right) \, = \, \overline{T_{H}\!\left(\conv \!\left(\phi^{H}\right)\right)} \, = \, \cconv \!\left(T_{H}(\phi)^{H}\right) \, = \, \cconv \!\left((\phi\vert_{H})^{H}\right) .
\end{displaymath} In turn, \begin{displaymath}
    T_{H}\!\left( \cconv \!\left(\phi^{H}\right) \right) \cap \Delta(H) \, = \, \cconv \!\left((\phi\vert_{H})^{H}\right) \cap \Delta(H) \, \ne \, \emptyset
\end{displaymath} by assumption, wherefore \begin{displaymath}
    {\cconv \!\left(\phi^{H}\right)} \cap {D(H)} \, = \, {\cconv \!\left(\phi^{H}\right)} \cap {T_{H}^{-1}(\Delta(H))} \, \ne \, \emptyset
\end{displaymath} and so ${\cconv \!\left(\phi^{G}\right)} \cap {D(H)} \ne \emptyset$, which proves~\eqref{eq:nonempty}. From~\eqref{eq:nonempty} and compactness of $\cconv \!\left(\phi^{G}\right)$, we now deduce that \begin{displaymath}
    \cconv \!\left(\phi^{G}\right) \cap \bigcap\nolimits_{H \in \mathcal{H}} D(H) \, \ne \, \emptyset ,
\end{displaymath} that is, there exists $\chi \in \cconv \!\left(\phi^{G}\right) \cap \bigcap\nolimits_{H \in \mathcal{H}} D(H)$. Then $\chi \in \PD(G)$ by Lemma~\ref{lemma:continuity}. In particular, $\chi$ is continuous and so \begin{displaymath}
    C \, \defeq \, \left\{ (g,h) \in G \times G \left\vert \, \chi\!\left(ghg^{-1}\right) = \chi(h) \right\} \right.
\end{displaymath} is closed in $G \times G$. As $(\bigcup \mathcal{H}) \! \times \! (\bigcup \mathcal{H})$ is dense in $G \times G$ and \begin{displaymath}
    \left(\bigcup \mathcal{H}\right) \! \times \! \left(\bigcup \mathcal{H}\right) \, = \, \bigcup\nolimits_{H \in \mathcal{H}} H \times H \, \subseteq \, C,
\end{displaymath} it follows that $C = G \times G$, i.e., $\chi \in \Delta(G)$. Hence, $\chi \in \ch(G)$ as desired. \end{proof}

\section{Continuous geometries and their coordinate rings}\label{section:continuous.rings}

\begin{flushright}
	\begin{minipage}[t]{0.85\linewidth}\itshape
		John von Neumann's brilliant mind blazed over lattice theory like a meteor, during a brief period centering around 1935--1937.
	\end{minipage}
\end{flushright}
\begin{flushright}
	Garrett Birkhoff, \cite[p.~50]{birkhoff}
\end{flushright}

A \emph{continuous geometry}~\cite{VonNeumannContinuousGeometry} is a complete, complemented, modular lattice~$L$ such that, for every chain $C \subseteq L$ and every $x \in L$, \begin{displaymath}
    x \wedge \bigvee C = \bigvee \{ x \wedge y \mid y \in C \}, \qquad x \vee \bigwedge C = \bigwedge \{ x \vee y \mid y \in C \} .
\end{displaymath} We give an overview of these objects, their coordinate rings and the associated unit groups. For more details, the reader is referred to von Neumann's original work~\cite{VonNeumannBook} as well as the monographs by Maeda~\cite{MaedaBook} and Goodearl~\cite{GoodearlBook}.

We start off with the coordinatization by regular rings. A unital ring $R$ is called \emph{(von Neumann) regular} if, for every $a \in R$, there exists $b \in R$ such that $aba = a$. Due to~\cite[Theorem~2]{VonNeumannRegularRings} (see also~\cite[II.II, Theorem~2.4, p.~72]{VonNeumannBook}), if $R$ is a regular ring, then the set \begin{displaymath}
    \lat(R) \, \defeq \, \{ aR \mid a \in R \} ,
\end{displaymath} partially ordered by inclusion, constitutes a complemented, modular lattice. Conversely,  von Neumann's coordinatization theorem~\cite[II.XIV, Theorem~14.1, p.~208]{VonNeumannBook}\footnote{This was announced in~\cite{VonNeumannAlgebraicTheory}.} asserts that, for any complemented, modular lattice $L$ of an order at least four, there exists a regular ring $R$, unique up to isomorphism, such that $L \cong \lat(R)$.

Recall that an algebraic structure is said to be \emph{(directly) irreducible} if it contains at least two elements and it is not isomorphic to a direct product of two structures (of the same type) of cardinality at least two. Every continuous geometry is a subdirect product of irreducible continuous geometries~\cite[V.3, Satz~3.2, p.~128]{MaedaBook}. The property of irreducibility translates conveniently via coordinatization and, moreover, is reflected in the center of the corresponding ring (Remark~\ref{remark:irreducible}). The \emph{center} of a unital ring $R$ will be denoted by \begin{displaymath}
    \cent(R) \, \defeq \, \{ a \in R \mid \forall b \in R \colon \, ab = ba \} .
\end{displaymath}

\begin{remark}[\cite{VonNeumannBook}]\label{remark:irreducible} Let $R$ be a regular ring. The following are equivalent. \begin{enumerate}
	\item\label{remark:irreducible.1} $R$ is irreducible.
	\item\label{remark:irreducible.2} $\cent(R)$ is a field.
	\item\label{remark:irreducible.3} $\lat(R)$ is irreducible.
\end{enumerate} The equivalence \ref{remark:irreducible.1}$\Longleftrightarrow$\ref{remark:irreducible.2} is due to~\cite[II.II, Theorem~2.7, p.~75]{VonNeumannBook}, while \ref{remark:irreducible.2}$\Longleftrightarrow$\ref{remark:irreducible.3} is established in~\cite[II.II, Theorem~2.9, p.~76]{VonNeumannBook}. \end{remark}

The \emph{unit group} of a unital ring $R$, i.e., the group of multiplicatively invertible elements of $R$, will be denoted by \begin{displaymath} 
    \GL(R) \, \defeq \, \{ a \in R \mid \exists b \in R \colon \, ab = ba = 1 \} .
\end{displaymath} Preparing the statement of the subsequent proposition, let us also recall that the \emph{center} of a group $G$ is defined as \begin{displaymath}
    \cent(G) \, \defeq \, \{ g \in G \mid \forall h \in G \colon \, gh = hg \} \, \unlhd \, G .
\end{displaymath}

\begin{prop}[{\cite[Proposition~3.11]{Schneider25a}}, see also~{\cite[Lemma~8]{Ehrlich56}}]\label{proposition:center} Let $R$ be an irreducible, continuous ring. Then $\cent(\GL(R)) = \cent(R) \setminus \{ 0 \}$. \end{prop}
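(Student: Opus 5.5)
The inclusion $\cent(R)\setminus\{0\}\subseteq\cent(\GL(R))$ is the easy direction. By Remark~\ref{remark:irreducible}, $\cent(R)$ is a field. Hence every non-zero central element $z$ has an inverse $z^{-1}$ in $\cent(R)$, so $z\in\GL(R)$, and $z$ commutes with all of $R$, in particular with all units.

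For the converse, fix $g\in\cent(\GL(R))$; we must show that $g$ commutes with every $b\in R$. The plan is to show that $g$ commutes with every idempotent of $R$ and then that idempotents generate enough of $R$.

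\emph{Step 1: idempotents.} For an idempotent $e\in R$, the element $u\defeq 1-2e$ satisfies $u^{2}=1$, so $u$ is a unit. Commutation $gu=ug$ gives $ge=eg$ provided $2$ is invertible. In characteristic $2$ we use instead the units $1+ex(1-e)$ for $x\in R$; these are invertible with inverse $1-ex(1-e)$. Commutation of $g$ with them gives $g\,ex(1-e)=ex(1-e)\,g$ for all $x$, and symmetrically with the roles of $e$ and $1-e$ swapped. Taking $x=1$ is useless, so we use the general $x$: it says that $g$ commutes with the additive span of the sets $eR(1-e)$ and $(1-e)Re$, over all idempotents $e$.

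\emph{Step 2: spans.} We check that this span, together with the span of the idempotents, is all of $R$. This is the main obstacle. Regularity gives, for each $b$, idempotents $e=bb'$ and $f=b'b$ with $eR=bR$ and $Rf=Rb$. In a continuous (hence unit-regular, as von Neumann showed for irreducible continuous rings) ring, $b=ue'$ for a unit $u$ and an idempotent $e'$. Since $g$ is central in $\GL(R)$, it commutes with $u$, so it suffices that $g$ commutes with all idempotents. Unit-regularity thus reduces everything to idempotents.

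\emph{Step 3: idempotents in characteristic $2$.} For an idempotent $e$, we must still show $ge=eg$ when $2$ is not invertible. Write $g$ in Peirce form relative to $e$:
\begin{displaymath}
g=ege+eg(1-e)+(1-e)ge+(1-e)g(1-e).
\end{displaymath}
Commutation with all $1+ex(1-e)$ and $1+(1-e)xe$ forces the identities $g\,ex(1-e)=ex(1-e)\,g$. Expanding these in Peirce components yields $(1-e)ge\cdot ex(1-e)=0$ for all $x$, and similar identities. Irreducibility enters here: the ring is prime, since $\cent(R)$ is a field and $R$ is regular and irreducible, so von Neumann shows that irreducible continuous rings are simple. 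Primeness then forces $(1-e)ge=0$ whenever $e\neq 0$; likewise $eg(1-e)=0$. Hence $ge=ege=eg$, and so $g$ commutes with all idempotents, which finishes the argument via Step~2.
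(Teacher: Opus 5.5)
The paper does not prove this proposition; it quotes it from \cite[Proposition~3.11]{Schneider25a} (see also \cite[Lemma~8]{Ehrlich56}). Your argument is a correct, self-contained proof. Its core is this chain: $g$ commutes with the unipotent units $1+ex(1-e)$ and $1+(1-e)xe$, hence with every idempotent (Step~3). It then commutes with every $b\in R$, because unit-regularity gives $u\in\GL(R)$ with $bub=b$, so $b=u^{-1}(ub)$ with $ub\in\E(R)$. Unit-regularity of irreducible continuous rings is indeed standard and follows from von Neumann's dimension theory. For idempotents $e,f$ one has $eR\cong fR$ if and only if $\rk_R(e)=\rk_R(f)$, and $\rk_R(1-e)=1-\rk_R(e)$. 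Hence $eR\cong fR$ implies $(1-e)R\cong(1-f)R$, which is the cancellation criterion for unit-regularity.

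Some streamlining is possible. Step~3 never uses the characteristic, so the $1-2e$ trick in Step~1 and the unexecuted ``span'' plan at the start of Step~2 can simply be dropped.

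More substantially, you do not need primeness at all. Your stated reason for it (``$\cent(R)$ is a field and $R$ is regular and irreducible'') would not suffice on its own; only the simplicity of irreducible continuous rings gives it. Regularity alone finishes Step~3. The element $c\defeq(1-e)ge$ satisfies $c=(1-e)c$ and $cR(1-e)=\{0\}$, so choosing $y$ with $c=cyc$ gives $c=cy(1-e)c=0$. Symmetrically, $d\defeq eg(1-e)$ satisfies $d=ed$ and $dRe=\{0\}$, so $d=0$. This also removes the need for case distinctions. If you do use primeness with $cR(1-e)=\{0\}$, the condition needed is $e\neq 1$, not $e\neq 0$, though this is harmless.

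With this simplification, your argument actually shows $\cent(\GL(R))=\GL(R)\cap\cent(R)$ for every unit-regular ring $R$. Irreducibility enters only through Remark~\ref{remark:irreducible}, to identify $\GL(R)\cap\cent(R)$ with $\cent(R)\setminus\{0\}$. Compared with the paper's bare citation, your route makes explicit that unit-regularity (plus elementary regular-ring algebra) is the only property of continuous rings responsible for the statement.
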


We proceed to von Neumann's dimension theory. According to~\cite[I.VI, Theorem~6.9, p.~52]{VonNeumannBook} and~\cite[I.VII, p.~60, Corollary~1]{VonNeumannBook}, every irreducible continuous geometry $L$ admits a unique map $\delta_{L} \colon L \to [0,1]$, called the \emph{dimension function} of $L$, such that \begin{itemize}
	\item[---\,] $\delta_{L}(0) = 0$ and $\delta_{L}(1) = 1$,
	\item[---\,] $\delta_{L}(x) < \delta_{L}(y)$ for all $x,y \in L$ with $x < y$, and
	\item[---\,] $\delta_{L}(x \vee y) + \delta_{L}(x \wedge y) = \delta_{L}(x) + \delta_{L}(y)$ for all $x,y \in L$.
\end{itemize} This dimension function gives rise to a rank function on the corresponding coordinate ring (Theorem~\ref{theorem:rank.function}). Given a ring $R$, let $\E(R) \defeq \{ e \in R \mid ee = e \}$. A \emph{rank function} on a regular ring $R$ is a map $\rho \colon R \to [0,1]$ such that \begin{itemize}
	\item[---\,] $\rho(1) = 1$,
	\item[---\,] $\rho(ab) \leq \min \{ \rho(a), \rho(b) \}$ for all $a,b \in R$,
	\item[---\,] $\rho(e+f) = \rho(e) + \rho(f)$ for all $e,f \in \E(R)$ with $ef = fe = 0$,\footnote{This readily implies that $\rho(0) = 0$.} and
	\item[---\,] $\rho(a) > 0$ for every $a \in R\setminus \{ 0 \}$.
\end{itemize} Such a rank function naturally generates a metric and, in turn, a topology on the underlying ring as follows.

\begin{remark}[\cite{VonNeumannBook}, see~{\cite[Remark~7.8]{SchneiderGAFA}} for detailed references]\label{remark:rank.topology} Let $\rho$ be a rank function on a regular ring $R$. Then \begin{displaymath}
	d_{\rho} \colon \, R \times R \, \longrightarrow \, [0,1], \quad (a,b) \, \longmapsto \, \rho(a-b)
\end{displaymath} is a metric on $R$. The \emph{$\rho$-topology}, i.e., the topology generated by $d_{\rho}$, turns $R$ into a Hausdorff topological ring. Since $d_{\rho}$ restricts to a bi-invariant metric on $\GL(R)$, the latter endowed with the relative $\rho$-topology constitutes a topological group with SIN. \end{remark}

\begin{thm}[\cite{VonNeumannBook}]\label{theorem:rank.function} Let $R$ be an irreducible, continuous ring. Then \begin{displaymath}
	\rk_{R} \colon \, R \, \longrightarrow \, [0,1], \quad a \, \longmapsto \, \delta_{\lat(R)}(aR)
\end{displaymath} is the unique rank function on $R$. Moreover, the metric $d_{R} \defeq d_{\rk_{R}}$ is complete. \end{thm}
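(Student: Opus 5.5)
The proof splits into three parts: $\rk_{R}$ is a rank function, it is the only one, and $d_{R}$ is complete. The first two are transfers between $\lat(R)$ and $R$ using the defining properties of $\delta_{\lat(R)}$. The third uses the continuity axioms of the geometry in an essential way and is where I expect the real work.

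\emph{Rank function axioms.} The map $\rk_{R}$ is well defined because $aR \in \lat(R)$ for every $a \in R$. We have $\rk_{R}(1) = \delta(R) = 1$. Faithfulness follows from strict monotonicity of $\delta$: if $a \ne 0$, then $0 < aR$. Since $abR \subseteq aR$, monotonicity gives $\rk_{R}(ab) \leq \rk_{R}(a)$.

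For $\rk_{R}(ab) \leq \rk_{R}(b)$ I use regularity. Write $bR = fR$ with $f \in \E(R)$. The map $bR \to abR$, $x \mapsto ax$, is a surjection of right $R$-modules, and in a regular ring its kernel is a direct summand. So $abR$ is isomorphic to a direct summand $gR$ of $bR$. Isomorphic principal right ideals are perspective in $\lat(R)$, and von Neumann's dimension function is invariant under perspectivity. Hence $\delta(abR) = \delta(gR) \leq \delta(bR)$.

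For orthogonal idempotents $e,f$ we have $(e+f)R = eR \oplus fR$, so $eR \wedge fR = 0$ and $eR \vee fR = (e+f)R$. The modular identity for $\delta$ then gives additivity.

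\emph{Uniqueness.} Let $\rho$ be any rank function. The first step is to show that $\rho(a)$ depends only on $aR$. If $aR = bR$, then $a = bx$ and $b = ay$ for some $x,y$, and the axiom $\rho(ab) \leq \rho(a)$ gives $\rho(a) \leq \rho(b) \leq \rho(a)$. So $\delta'(aR) \defeq \rho(a)$ defines a map $\lat(R) \to [0,1]$. It is strictly monotone: if $eR \subsetneq fR$ with $e,f$ idempotent, write $fR = eR \oplus gR$ with $g \ne 0$ and use additivity on orthogonal idempotents. The modular identity follows by splitting $x \vee y$ and $x \wedge y$ into orthogonal idempotent pieces. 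Hence $\delta'$ satisfies the three characterizing properties, and the uniqueness part of von Neumann's dimension theorem gives $\delta' = \delta_{\lat(R)}$, that is, $\rho = \rk_{R}$.

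\emph{Completeness (main obstacle).} Take a Cauchy sequence and pass to a subsequence with $\rk_{R}(a_{n} - a_{n+1}) < 2^{-n}$. To build the limit I would work with graphs. The lattice $\lat(\M_{2}(R))$, equivalently the lattice of principal submodules of $R^{2}$, is again a continuous geometry carrying a dimension function. For each $n$ let $\Gamma_{n} \defeq \{ (a_{n}x, x) \mid x \in R \}$ be the graph of $a_{n}$. A submodule $\Gamma$ is a graph of some $a \in R$ exactly when it is a complement of the axis $A \defeq R \times \{0\}$.

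Set
\begin{displaymath}
    \Gamma \, \defeq \, \bigvee\nolimits_{n} \bigwedge\nolimits_{m \geq n} \Gamma_{m} .
\end{displaymath}
The inner meets form an increasing chain in $n$. The dimension estimate $\delta(\Gamma_{m} \wedge \Gamma_{m+1}) \geq 1 - \tfrac{1}{2}\cdot 2^{-m}$, together with subadditivity over $m \geq n$ and chain continuity of $\delta$, gives $\delta(\bigwedge_{m\geq n}\Gamma_{m}) \geq \tfrac{1}{2} - 2^{-n}$. Since every $\Gamma_{m}$ meets $A$ trivially, so does every inner meet. The two chain-continuity axioms then give $\Gamma \wedge A = 0$, and the dimension count gives $\delta(\Gamma) = \tfrac{1}{2}$. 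Hence $\Gamma$ is a complement of $A$, so $\Gamma$ is the graph of some $a \in R$. By the same continuity estimates, $\delta(\Gamma \wedge \Gamma_{n}) \to \tfrac{1}{2}$, and this translates into $\rk_{R}(a - a_{n}) \to 0$.

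The delicate points are two. First, the dimension bookkeeping in $\M_{2}(R)$, which requires normalizing its dimension function against $\rk_{R}$. Second, the translation between the graph lattice and rank distances, namely $\rk_{R}(a-b) = 2\bigl(\tfrac{1}{2} - \delta(\Gamma_{a} \wedge \Gamma_{b})\bigr)$. That identity is what I would check first.
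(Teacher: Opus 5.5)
The paper gives no argument of its own here; it quotes von Neumann's Theorems~17.1, 17.2 and 17.4 of Part~II. Your first two parts are a sound reconstruction. One caveat: the step ``isomorphic principal right ideals are perspective'' is a theorem about continuous geometries, resting on comparability and finiteness, and it fails in general regular rings. Quoting it is still legitimate.

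The completeness part has a genuine gap. You assert without proof that $\lat(\M_{2}(R))$, the lattice of direct summands of $R^{2}_{R}$, is again a continuous geometry whose dimension function is continuous along chains. Every infinite operation in your construction takes place there: the existence of $\bigwedge_{m\geq n}\Gamma_{m}$, the continuity axiom yielding $\Gamma\wedge A=0$, and the chain continuity behind $\delta(\bigwedge_{m\geq n}\Gamma_{m})\geq\tfrac12-2^{-n}$. This is not a formality. Take the (reducible) continuous ring $R$ of finitely-valued functions $\N\to F$, with $F$ an infinite field. Then $\lat(R)\cong\mathcal{P}(\N)$ is a continuous geometry. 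Yet the summands of $R^{2}$ with fibre a line $\ell_{k}$ at $k$ and $0$ elsewhere (pairwise distinct $\ell_{k}\subseteq F^{2}$) have no least upper bound. Any upper bound has finite range, hence fibre $F^{2}$ at some $k_{0}$, and it can be shrunk to $\ell_{k_{0}}$ there. For irreducible $R$ the claim is true, but the standard way to see it goes through rank-completeness of $\M_{2}(R)$, that is, through what you are proving.

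The repair is to keep the graphs inside $\lat(R)$. If $R$ is discrete, $\rk_{R}$ takes values in $\tfrac{1}{n}\Z$ and completeness is trivial. Otherwise choose $e\in\E(R)$ with $\delta_{\lat(R)}(eR)=\tfrac12$. Then $eR\cong(1-e)R$ via some $u\in(1-e)Re$ and $v\in eR(1-e)$ with $uv=1-e$ and $vu=e$. The complements of $A\defeq(1-e)R$ in $\lat(R)$ are exactly the graphs $\Gamma_{t}\defeq(e+t)R=\{x+tx\mid x\in eR\}$ with $t\in(1-e)Re$. Moreover, $\rk_{R}(t-t')=\tfrac12-\delta_{\lat(R)}(\Gamma_{t}\wedge\Gamma_{t'})$. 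With this, your argument runs verbatim, with $2^{1-n}$ in place of $2^{-n}$, and shows that $(1-e)Re$ is complete. Set $e_{1}\defeq e$ and $e_{2}\defeq 1-e$. Multiplication by $u$ and $v$ makes each Peirce corner $e_{i}Re_{j}$ isometric to $(1-e)Re$. Since $\rk_{R}(x)\leq\sum_{i,j}\rk_{R}(e_{i}xe_{j})$, it follows that $R$ is complete.

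A minor point: in your normalization the estimate should read $\delta(\Gamma_{m}\wedge\Gamma_{m+1})\geq\tfrac12-\tfrac12\cdot2^{-m}$, not $1-\tfrac12\cdot2^{-m}$.
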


\begin{proof} The map $\rk_{R}$ is a rank function on $R$ by~\cite[II.XVII, Theorem~17.1, p.~224]{VonNeumannBook} and unique as such due to~\cite[II.XVII, Theorem~17.2, p.~226]{VonNeumannBook}. Completeness of $(R,d_{R})$ is proved in~\cite[II.XVII, Theorem~17.4, p.~230]{VonNeumannBook}. \end{proof}

\begin{remark}[cf.~{\cite[Remark~4.5(B)]{BernardSchneider}}]\label{remark:rank.function} Let $R$ be an irreducible, continuous ring. If $\rho$ is a rank function on a regular ring $S$ and $\phi \colon R \to S$ is a unital ring homomorphism, then $\rho \circ \phi = \rk_{R}$. \end{remark}

An irreducible, continuous ring $R$ is called \emph{discrete} if its \emph{rank topology}, i.e., the $\rk_{R}$-topology in the sense of Remark~\ref{remark:rank.topology}, is discrete. By work of von Neumann~\cite{VonNeumannBook} (see~\cite[Remark~3.6]{SchneiderIMRN} for detailed references), the discrete irreducible, continuous rings are precisely the ones isomorphic to a matrix ring $\M_{n}(D)$ for some division ring $D$ and some $n \in \N_{>0}$. Of course, if $D$ is a division ring and $n \in \N_{>0}$, then \begin{displaymath}
    \rk_{\M_{n}(D)}(a) \, = \, \tfrac{\rank(a)}{n} \qquad (a \in \M_{n}(D)) 
\end{displaymath} due to~\cite[I.10.12, p.~359--360]{bourbaki} and the uniqueness asserted by Theorem~\ref{theorem:rank.function}. The following remark describes a basic example of a non-discrete irreducible, continuous ring.

\begin{remark}\label{remark:simple.example} Let $F$ be a field. Consider the inductive limit $\lim_{i \to \infty} M_{2^{i}}(F)$ of the unital $F$-algebra embeddings \begin{displaymath}
    \iota_{i} \colon \, \M_{2^{i}}(F) \, \longrightarrow \, \M_{2^{i+1}}(F), \quad a \, \longmapsto \, \begin{pmatrix} a & 0 \\ 0 & a \end{pmatrix} \qquad (i \in \N) .
\end{displaymath} One readily checks that ${\rk_{\M_{2^{i+1}}(F)}} \circ {\iota_{i}} = {\rk_{\M_{2^{i}}(F)}}$ for each $i \in \N$. Hence, the considered rank functions jointly extend to a unique rank function on $\lim_{i \to \infty} M_{2^{i}}(F)$, and it turns out that the completion of $\lim_{i \to \infty} M_{2^{i}}(F)$ with respect to the corresponding metric constitutes a non-discrete irreducible, continuous ring~\cite{VonNeumannExamples,Halperin68}, which we denote by\footnote{By~\cite[Theorem~1]{Halperin68} (see also~\cite[Theorem~2.2]{AraClaramunt}), replacing $(2^{i})_{i \in \N}$ in this construction by any other \emph{factor sequence} in the sense of~\cite[\S2]{Halperin68} results in a ring isomorphic to $\M_{\infty}(F)$.} $\M_{\infty}(F)$. By~\cite[Theorem~2.8(c)]{Goodearl78}, the natural embedding $F \hookrightarrow \cent(\M_{\infty}(F))$ is an isomorphism. We define \begin{displaymath}
    \A(F) \, \defeq \, \GL(\M_{\infty}(F)) .
\end{displaymath} It was shown in~\cite[p.~258]{CarderiThom} (see also~\cite[Lemma~6.14]{Schneider25a}) that \begin{displaymath}
    \A(F) \, = \, \overline{\lim\nolimits_{i \to \infty} \SL_{2^{i}}(F)} ,
\end{displaymath} with the inductive limit constructed from the restrictions of the maps $(\iota_{i})_{i \in \N}$. \end{remark}

Algebras of the kind constructed in Remark~\ref{remark:simple.example} are contained in any non-discrete irreducible, continuous ring (Lemma~\ref{lemma:embedding}).

\begin{lem}[{\cite[Lemma~9.3]{BernardSchneider}}]\label{lemma:matrix.subalgebras} Let $R$ be a non-discrete irreducible, continuous ring, let $F \defeq \cent(R)$ and $m,n \in \N_{>0}$. Every unital $F$-subalgebra of $R$ isomorphic to $\M_{n}(F)$ is contained in some unital $F$-subalgebra of $R$ isomorphic to $\M_{mn}(F)$. \end{lem}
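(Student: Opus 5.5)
The plan is to refine the given matrix units of the copy of $\M_{n}(F)$ by an $m\times m$ system of matrix units living in the corner at the first diagonal idempotent. Fix a unital $F$-subalgebra $S \leq R$ with $S \cong \M_{n}(F)$, and let $(e_{ij})_{i,j=1}^{n}$ be the images of the standard matrix units. These satisfy $e_{ij}e_{kl} = \delta_{jk}e_{il}$ and $\sum_{i} e_{ii} = 1$. All $e_{ii}$ are pairwise equivalent idempotents, so $\rk_{R}(e_{ii}) = \tfrac{1}{n}$ for each $i$. Put $e \defeq e_{11}$. It suffices to produce elements $(f_{kl})_{k,l=1}^{m}$ of $eRe$ such that
\begin{displaymath}
    f_{kl}f_{k'l'} = \delta_{lk'}f_{kl'}, \qquad \sum\nolimits_{k=1}^{m} f_{kk} = e .
\end{displaymath}
Given these, I set $E_{(i,k),(j,l)} \defeq e_{i1}f_{kl}e_{1j}$ for $i,j \in \{1,\ldots,n\}$ and $k,l \in \{1,\ldots,m\}$. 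A direct computation, using $e_{1j}e_{j'1} = \delta_{jj'}e$ and $ef_{kl} = f_{kl} = f_{kl}e$, shows that these form a complete system of $mn \times mn$ matrix units in $R$ with sum $1$. Moreover,
\begin{displaymath}
    \sum\nolimits_{k} E_{(i,k),(j,k)} = e_{i1}ee_{1j} = e_{ij} .
\end{displaymath}
Because $F = \cent(R)$ is a field (Remark~\ref{remark:irreducible}) and all these matrix units are non-zero, their $F$-span is a unital $F$-subalgebra isomorphic to $\M_{mn}(F)$. Linear independence follows by multiplying a vanishing linear combination on both sides by matrix units, and that span contains $S$.

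To construct the $f_{kl}$, I first decompose $e$ into $m$ pairwise orthogonal idempotents $f_{1},\ldots,f_{m}$ with $\sum_{k} f_{k} = e$ and $\rk_{R}(f_{k}) = \tfrac{1}{mn}$. Since $R$ is non-discrete, the dimension function $\delta_{\lat(R)}$ is not finitely valued, and by von Neumann's dimension theory its range is all of $[0,1]$. More precisely, for every $x \in \lat(R)$ and $t \in [0,\delta(x)]$ there is some $y \leq x$ with $\delta(y) = t$. Starting from $x_{0} \defeq eR$, I inductively choose principal right ideals $y_{1},\ldots,y_{m}$ that are independent, lie below $eR$, and each have dimension $\tfrac{1}{mn}$; I take relative complements inside $eR$ at each step, using modularity. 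By additivity, $y_{1} \oplus \cdots \oplus y_{m} = eR$. A standard argument in regular rings turns a direct decomposition of $eR$ into orthogonal idempotents $f_{k}$ with $f_{k}R = y_{k}$ and $\sum_{k} f_{k} = e$, so that $f_{k} \in eRe$.

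Next, since $\delta(f_{k}R) = \delta(f_{1}R)$, von Neumann's theory shows that $f_{k}R$ and $f_{1}R$ are perspective, hence isomorphic as right $R$-modules. In an irreducible continuous geometry, equal dimension implies perspectivity. An isomorphism $f_{1}R \to f_{k}R$ is given by left multiplication by some $f_{k1} \in f_{k}Rf_{1}$, with inverse given by some $f_{1k} \in f_{1}Rf_{k}$. These satisfy $f_{1k}f_{k1} = f_{1}$ and $f_{k1}f_{1k} = f_{k}$. Setting $f_{kl} \defeq f_{k1}f_{1l}$ (with $f_{11} \defeq f_{1}$) gives the required matrix units in $eRe$.

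The main obstacle is the dimension-theoretic input in the second step. One needs that, in a non-discrete irreducible continuous geometry, every value in $[0,\delta(x)]$ is attained below $x$; this relies on the continuity axioms, since the range of $\delta$ is a dense subgroup-like set that must then be closed. One also needs that equal dimension yields isomorphic principal right ideals. I would cite these from~\cite{VonNeumannBook} (Part I, Chapters~VI--VII, and Part~II on perspectivity versus isomorphism of principal right ideals) rather than reprove them. Everything else is routine matrix-unit algebra.
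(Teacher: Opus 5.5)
Your proof is correct. There is no in-paper argument to compare it with: the paper does not prove this lemma but imports it verbatim from \cite[Lemma~9.3]{BernardSchneider}.

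Your route is the standard matrix-unit refinement. You split $e_{11}$ into $m$ orthogonal, pairwise isomorphic idempotents of rank $\tfrac{1}{mn}$, build $m\times m$ matrix units $(f_{kl})$ in $e_{11}Re_{11}$, and set $E_{(i,k),(j,l)} = e_{i1}f_{kl}e_{1j}$. I checked the following, and they all hold:
\begin{enumerate}
\item the multiplication table of the $E_{(i,k),(j,l)}$;
\item $\sum_{i,k}E_{(i,k),(i,k)} = 1$;
\item linear independence over the field $F$;
\item the passage from an isomorphism $f_{1}R\cong f_{k}R$ to elements $f_{k1}\in f_{k}Rf_{1}$ and $f_{1k}\in f_{1}Rf_{k}$;
\item the identity $e_{ij} = \sum_{k}E_{(i,k),(j,k)}$, which gives $S$ inside the new copy of $\M_{mn}(F)$.
\end{enumerate}

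One citation needs tightening. Non-discreteness enters exactly where you need some $y\leq x$ with prescribed dimension $t\in[0,\delta(x)]$. That does not follow from the range of $\delta_{\lat(R)}$ being $[0,1]$ alone. You also need von Neumann's comparison result: if $\delta(z)\leq\delta(x)$, then $z$ is perspective to some $y\leq x$. This sits in the same part of \cite{VonNeumannBook} as the other facts you use:
\begin{enumerate}
\item the dichotomy that the range is $\{0,\tfrac{1}{N},\ldots,1\}$, which forces a discrete rank metric, or else $[0,1]$;
\item ``equal dimension implies perspectivity'';
\item perspective principal right ideals are isomorphic modules, since they have a common complement.
\end{enumerate}

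Your approach also gives a stronger conclusion than the statement. Since $e_{ij}\mapsto\sum_{k}E_{(i,k),(j,k)}$, ordering the indices with $k$ as the block index makes the inclusion $S\hookrightarrow\M_{mn}(F)$ literally $a\mapsto\mathrm{diag}(a,\ldots,a)$. For $m=2$ this is exactly the map $\iota_{i}$ of Remark~\ref{remark:simple.example}. Used iteratively in the proof of Lemma~\ref{lemma:embedding}, it yields the compatible isomorphisms $\phi_{i}$ directly, so the Skolem--Noether step (Lemma~\ref{lemma:skolem.noether}) would no longer be needed.
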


\begin{lem}\label{lemma:skolem.noether} Let $F$ be a field, let $m,n \in \N_{>0}$, let $\iota \colon \M_{n}(F) \to \M_{mn}(F)$ be a unital $F$-algebra homomorphism, and let $R$ be a unital $F$-algebra isomorphic to $\M_{mn}(F)$. If $\phi \colon \M_{n}(F) \to R$ is a unital $F$-algebra homomorphism, then there exists an $F$-algebra isomorphism $\psi \colon \M_{mn}(F) \to R$ such that $\psi \circ \iota = \phi$. \end{lem}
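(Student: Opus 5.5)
The statement is a relative Skolem--Noether theorem, and I would prove it by composing two isomorphisms and then correcting the result by an inner automorphism.

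\textbf{Step 1: reduce to a single algebra.} Fix any $F$-algebra isomorphism $\theta \colon \M_{mn}(F) \to R$, which exists by assumption. Consider the two unital $F$-algebra homomorphisms
\begin{displaymath}
    \alpha \defeq \theta^{-1} \circ \phi \colon \M_{n}(F) \to \M_{mn}(F), \qquad \beta \defeq \iota \colon \M_{n}(F) \to \M_{mn}(F).
\end{displaymath}
It suffices to find $u \in \GL(\M_{mn}(F))$ with $u\beta(a)u^{-1} = \alpha(a)$ for all $a \in \M_{n}(F)$. Then $\psi \defeq \theta \circ \mathrm{Ad}(u)$ is an $F$-algebra isomorphism, and it satisfies $\psi \circ \iota = \theta \circ \alpha = \phi$.

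\textbf{Step 2: conjugacy via module theory.} Such a $u$ exists by the classical Skolem--Noether theorem, since $\M_{n}(F)$ is a central simple algebra over $F$ and $\M_{mn}(F)$ is simple. To keep the argument self-contained I would instead argue directly. Via $\alpha$ and via $\beta$, the space $F^{mn}$ becomes a unital left $\M_{n}(F)$-module. Since $\M_{n}(F)$ is semisimple with unique simple module $F^{n}$, every unital module of $F$-dimension $mn$ is isomorphic to $(F^{n})^{m}$. Hence the two module structures are isomorphic through some $F$-linear bijection $u \colon F^{mn} \to F^{mn}$ satisfying $u\beta(a) = \alpha(a)u$. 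Viewing $u$ as a matrix, it is invertible, and we obtain $u\beta(a)u^{-1} = \alpha(a)$.

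\textbf{Where the care is needed.} Unitality matters: $\beta(1) = \alpha(1) = 1$ ensures that both modules are unital. Otherwise, an isomorphism class of possibly non-unital modules would involve a zero part, and its dimension could differ between the two structures. For the decomposition into copies of $F^{n}$, one can alternatively use the matrix units $\alpha(e_{ij})$: the idempotents $\alpha(e_{ii})$ are orthogonal, sum to $1$, and are pairwise conjugate via the $\alpha(e_{ij})$. Thus they have equal ranks $mn/n = m$, and choosing a basis of the image of $\alpha(e_{11})$ and transporting it by $\alpha(e_{i1})$ gives an adapted basis. Doing the same for $\beta$ and matching the two bases yields $u$ explicitly.
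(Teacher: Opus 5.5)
Your proof is correct and follows the paper's argument. You fix an isomorphism $\theta$, use Skolem--Noether to conjugate $\theta\circ\iota$ onto $\phi$, and twist $\theta$ by that inner automorphism; the paper conjugates by $b \in \GL(R)$ and you by $u \in \GL(\M_{mn}(F))$, which is the same with $b = \theta(u)$, and your matrix-unit construction is a valid self-contained proof of the special case of Skolem--Noether that the paper simply cites.
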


\begin{proof} Let $\phi \colon \M_{n}(F) \to R$ be a unital $F$-algebra homomorphism. Fix any $F$-algebra isomorphism $\theta \colon \M_{mn}(F) \to R$. Thanks to the Skolem--Noether theorem (see, e.g.,~\cite[I.3, Theorem~3.14, p.~93]{FarbDennis}), there exists $b \in \GL(R)$ with $b\theta(\iota(a))b^{-1} = \phi(a)$ for all $a \in \M_{n}(F)$. So, $\psi \colon \M_{mn}(F) \to R, \, a \mapsto b\theta(a)b^{-1}$ constitutes an $F$-algebra isomorphism such that $\psi \circ \iota = \phi$. \end{proof}

\begin{lem}\label{lemma:embedding} Let $R$ be a non-discrete irreducible, continuous ring, $F \defeq \cent(R)$. Then there exists a unital $F$-algebra embedding from $\M_{\infty}(F)$ into $R$. \end{lem}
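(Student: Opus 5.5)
We will build the embedding step by step along the tower $\M_{2^{i}}(F)$, using Lemma~\ref{lemma:matrix.subalgebras} and Lemma~\ref{lemma:skolem.noether}, and then extend it to the completion by continuity, using Remark~\ref{remark:rank.function} and completeness of $d_{R}$ (Theorem~\ref{theorem:rank.function}).

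\emph{Step 1: a compatible sequence.} We construct unital $F$-algebra homomorphisms $\phi_{i} \colon \M_{2^{i}}(F) \to R$ satisfying $\phi_{i+1} \circ \iota_{i} = \phi_{i}$ for all $i \in \N$. Start with $\phi_{0} \colon \M_{1}(F) = F \to R$, the inclusion of the center; this is a unital $F$-algebra homomorphism.

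Given $\phi_{i}$, note that its image $\phi_{i}(\M_{2^{i}}(F))$ is a unital $F$-subalgebra of $R$ isomorphic to $\M_{2^{i}}(F)$, because $\M_{2^{i}}(F)$ is simple and so $\phi_{i}$ is injective. Lemma~\ref{lemma:matrix.subalgebras} with $m = 2$ gives a unital $F$-subalgebra $S \subseteq R$ containing this image, with $S \cong \M_{2^{i+1}}(F)$. Regard $\phi_{i}$ as a map into $S$ and apply Lemma~\ref{lemma:skolem.noether} to $\iota_{i}$ and $\phi_{i}$. This yields an isomorphism $\psi \colon \M_{2^{i+1}}(F) \to S$ with $\psi \circ \iota_{i} = \phi_{i}$, and we set $\phi_{i+1} \defeq \psi$, viewed as a map into $R$.

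\emph{Step 2: the inductive limit.} The compatible family $(\phi_{i})_{i \in \N}$ induces a unital $F$-algebra homomorphism $\phi \colon \lim_{i\to\infty} \M_{2^{i}}(F) \to R$. This map is isometric for the rank metrics. Indeed, $\rk_{R} \circ \phi_{i}$ is a rank function on the irreducible, continuous ring $\M_{2^{i}}(F)$ by Remark~\ref{remark:rank.function}. By uniqueness of rank functions it equals $\rk_{\M_{2^{i}}(F)}$, and the latter agrees with the rank function of the limit restricted to $\M_{2^{i}}(F)$. Hence $\rk_{R}(\phi(a) - \phi(b))$ equals the limit rank of $a - b$ for all $a,b$.

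\emph{Step 3: extension to the completion.} Since $(R, d_{R})$ is complete and $\phi$ is isometric, $\phi$ extends uniquely to an isometric map $\bar{\phi} \colon \M_{\infty}(F) \to R$. Addition and multiplication are continuous for the rank topology on both sides (Remark~\ref{remark:rank.topology}). Therefore the algebraic identities expressing that $\phi$ is a unital $F$-algebra homomorphism pass to the closure, so $\bar{\phi}$ is a unital $F$-algebra homomorphism. Being an isometry, $\bar{\phi}$ is injective.

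\emph{Main obstacle.} The only delicate point is the isometry claim in Step 2: one must check that $\rk_{R} \circ \phi_{i}$ is genuinely a rank function, so that uniqueness applies. This holds because $\phi_{i}$ is an injective unital homomorphism, and an injective unital homomorphism preserves the idempotent additivity and positivity conditions. Everything else is routine.
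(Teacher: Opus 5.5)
Your proof is correct and follows the paper's argument essentially step for step: Lemma~\ref{lemma:matrix.subalgebras} together with Lemma~\ref{lemma:skolem.noether} build a compatible tower $(\phi_{i})_{i \in \N}$, Remark~\ref{remark:rank.function} makes the induced map on the inductive limit isometric, and completeness of $d_{R}$ extends it. The differences are cosmetic. You choose each subalgebra $T_{i+1}$ inside the induction step, whereas the paper first builds the whole chain $(T_{i})_{i \in \N}$ and then the maps. Also, Remark~\ref{remark:rank.function} directly gives $\rk_{R} \circ \phi_{i} = \rk_{\M_{2^{i}}(F)}$, so your \emph{main obstacle} needs no separate check.
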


\begin{proof} Consider the inductive limit $S \defeq \lim_{i \to \infty} \M_{2^{i}}(F)$ of the sequence of unital $F$-algebra embeddings \begin{displaymath}
    \iota_{i} \colon \, \M_{2^{i}}(F) \, \longrightarrow \, \M_{2^{i+1}}(F), \quad a \, \longmapsto \, \begin{pmatrix} a & 0 \\ 0 & a \end{pmatrix} \qquad (i \in \N) .
\end{displaymath} An iterated application of Lemma~\ref{lemma:matrix.subalgebras} yields the existence of an ascending chain of unital $F$-subalgebras $(T_{i})_{i \in \N}$ of $R$ such that $T_{i} \cong \M_{2^{i}}(F)$ for each $i \in \N$. Using Lemma~\ref{lemma:skolem.noether} inductively, we find a sequence of $F$-algebra isomorphisms \begin{displaymath}
    \phi_{i} \colon \, \M_{2^{i}}(F) \, \longrightarrow \, T_{i} \qquad (i \in \N)
\end{displaymath} such that ${\phi_{i+1}} \circ {\iota_{i}} = {\phi_{i}}$ for each $i \in \N$. Therefore, $(\phi_{i})_{i \in \N}$ jointly extend to a unique unital $F$-algebra embedding $\phi \colon S \to R$. Moreover, Remark~\ref{remark:rank.function} asserts that ${\rk_{R}} \circ {\phi_{i}} = {\rk_{\M_{2^{i}}(F)}}$ for all $i \in \N$, whence ${{\rk_{\M_{\infty}(F)}}\vert_{S}} = {\rk_{R}} \circ {\phi}$. Consequently, \begin{displaymath}
    \phi \colon \, \!\left(S,{d_{\M_{\infty}(F)}\vert_{S \times S}}\right)\! \, \longrightarrow \, (R,{d_{R}})
\end{displaymath} is isometric. Since $(R,d_{R})$ is complete by Theorem~\ref{theorem:rank.function}, the map $\phi$ extends to a unique isometric map \begin{displaymath}
    \overline{\phi} \colon \, \! \left(\M_{\infty}(F),{d_{\M_{\infty}(F)}}\right)\! \, \longrightarrow \, (R,d_{R}) .
\end{displaymath} As $\M_{\infty}(F)$ and $R$ are Hausdorff topological rings by Remark~\ref{remark:rank.topology}, thus $\overline{\phi}$ is a ring homomorphism and, in turn, a unital $F$-algebra embedding. \end{proof}

\section{Charmenability and lack of characters}\label{section:charmenability}

In this section, we study continuous positive-definite functions and characters on the topological group $\A(\Q)$ (see Remark~\ref{remark:simple.example}). We start off with an application of a character rigidity result due to Kirillov~\cite{Kirillov} (see also~\cite{PetersonThom}).

\begin{lem}\label{lemma:3} If $F$ is an infinite field, then $\ch(\A(F)) = \{ 1 \}$. \end{lem}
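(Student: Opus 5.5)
Let $\chi \in \ch(\A(F))$ be a continuous character. We will restrict $\chi$ to the dense union of the subgroups $\SL_{2^{i}}(F)$ embedded via the maps $\iota_{i}$ (Remark~\ref{remark:simple.example}). Then we will apply Kirillov's character rigidity theorem on each of these subgroups and pass to the limit using continuity.

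Recall Kirillov's theorem~\cite{Kirillov} (see also~\cite{PetersonThom}) for an infinite field $F$ and $n \geq 2$. Every normalized character $\psi$ on $\SL_{n}(F)$ (positive-definite, conjugation-invariant, $\psi(1)=1$) that is extremal in the convex set of such functions vanishes outside the center $\cent(\SL_{n}(F))$, which consists of the scalar matrices $\lambda 1$ with $\lambda^{n}=1$. By a Choquet-type integral decomposition, an arbitrary normalized character on $\SL_{n}(F)$ is an integral of extremal ones. Hence it also vanishes on $\SL_{n}(F)\setminus \cent(\SL_{n}(F))$.

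The argument then proceeds in four steps.

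\begin{enumerate}
\item Fix $g \in \A(F)$ with $g \neq 1$. The aim is to show $\chi(g)=1$.
\item Note that the non-scalar elements of $\bigcup_{i}\SL_{2^{i}}(F)$ are plentiful. A scalar $\lambda 1 \in \SL_{2^{i}}(F)$ is mapped by $\iota_{i}$ to the scalar $\lambda 1$ in all higher levels. Its distance from any non-central element of the union is bounded below by a positive constant in the rank metric. Indeed, if $a\ne \lambda 1$ in $\M_{2^{j}}(F)$, then $a-\lambda 1$ has rank at least $1$, but that only gives $\rk \geq 2^{-j}$, which is not uniform. So instead I use the following formulation.
\item By Kirillov, $\chi$ vanishes on every $h \in \bigcup_{i}\SL_{2^{i}}(F)$ that is not a scalar matrix of the form $\lambda 1$ with $\lambda$ a root of unity of $2$-power order.
\item By Lemma~\ref{lemma:positive.definite.functions}, $|\chi(h)-\chi(1)|^{2}\leq 2(1-\Re\chi(h))$. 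Take $h=h_{j}\to 1$ in the rank metric with $h_{j}$ non-central, for instance elementary matrices $1+e_{12}$ at level $j$, whose rank distance to $1$ is $2^{-j}$. Then $\chi(h_{j})=0$, while continuity forces $\chi(h_{j})\to\chi(1)=1$.
\end{enumerate}

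This is a contradiction: no continuous character exists at all. That is absurd, since the constant function $1$ is a character. So Kirillov must be applied correctly. The precise statement is that extremal characters of $\SL_{n}(F)$ are either the trivial character (more generally, characters factoring through the finite-dimensional quotient, here trivial since $\SL_{n}(F)$ is perfect for infinite $F$) or vanish off the center. A general normalized character is therefore $\psi = t\cdot 1 + (1-t)\psi_{0}$, where $t\in[0,1]$ and $\psi_{0}$ is supported on the center.

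The corrected argument then runs as follows. On level $i$, write $\chi|_{\SL_{2^{i}}(F)} = t_{i} + (1-t_{i})\psi_{i}$ with $\psi_{i}$ supported on the center. Evaluating at the non-central element $h_{j}$ (viewed at level $i\ge j$) gives $\chi(h_{j})=t_{i}$. Hence $t_{i}=t$ is independent of $i$, and continuity gives $t = \lim_{j}\chi(h_{j}) = 1$. Therefore $\chi\equiv 1$ on $\bigcup_{i}\SL_{2^{i}}(F)$, and since this union is dense in $\A(F)$ (Remark~\ref{remark:simple.example}), continuity of $\chi$ yields $\chi\equiv 1$.

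The main obstacle is correctly invoking Kirillov's theorem, in particular the decomposition of an arbitrary character into the trivial part and a centrally supported part. This requires the Choquet/extremal decomposition and perfectness of $\SL_{n}(F)$.
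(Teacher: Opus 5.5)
Your corrected argument is right and is essentially the paper's proof. Kirillov's theorem gives $\chi\vert_{\SL_{2^{i}}(F)} = t_{i} + (1-t_{i})\psi_{i}$ with $\psi_{i}$ supported on the finite center, and evaluating at non-central elements close to $1$ forces $t_{i}=1$; your observation that $t_{i}=\chi(h_{1})$ is independent of $i$ just replaces the paper's $\epsilon$--$\delta$ bookkeeping. Three small points: Krein--Milman suffices for the decomposition since there are only finitely many extreme characters, so no Choquet theory is needed; it is Kirillov's theorem itself, not perfectness, that excludes other extreme points; and the discarded Steps 1--4, which rest on a misstatement of Kirillov, should be deleted.
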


\begin{proof} Let $\chi \in \ch(\A(F))$. Since $\A(F) = \overline{\lim_{n \to \infty} \SL_{2^{n}}(F)}$ by Remark~\ref{remark:simple.example}, it suffices to check that \begin{displaymath}
	\forall \epsilon \in \R_{>0} \, \forall m \in \N \, \exists n \in \N_{>m} \colon \quad \Vert 1- {\chi\vert_{\SL_{2^{n}}(F)}} \Vert_{\infty} \leq \epsilon .
\end{displaymath} So, let $\epsilon \in \R_{>0}$ and $m \in \N$. As $\chi$ is continuous, there exists $\delta \in (0,1]$ such that $\Re \chi(g) \geq 1-\tfrac{\epsilon}{2}$ for all $g \in \A(F)$ with $\rk_{\M_{\infty}(F)}(1-g) \leq \delta$. Choose $n \in \N_{>m}$ such that $\tfrac{2}{2^{n}} \leq \delta$. Being a convex compact subset of the Hausdorff locally convex topological vector space $\C^{\SL_{2^{n}}(F)}$, the set $\ch(\SL_{2^{n}}(F))$ coincides with the topological closure of the convex hull of the set of its extreme points, due to the Krein--Milman theorem. By~\cite[Theorem~1]{Kirillov} (see also~\cite[Theorem~2.4]{PetersonThom}), the only extreme points of $\ch(\SL_{2^{n}}(F))$ are the trivial character $1$ and the ones induced from the center $C \defeq \cent(\SL_{2^{n}}(F)) \cong \left\{ a \in F \left\vert \, a^{2^{n}} = 1 \right\} \!\right.$, i.e., belonging to the finite set \begin{displaymath}
	\left. T \, \defeq \, \left\{ \tau \in \C^{\SL_{2^{n}}(F)} \, \right\vert \tau\vert_{C} \in \Hom(C,\T), \, \tau\vert_{\SL_{2^{n}}(F)\setminus C} = 0 \right\} .
\end{displaymath} It follows that \begin{align*}
	\ch(\SL_{2^{n}}(F)) \, &= \, \cconv (\{ 1 \} \cup  T) \, = \, \conv (\{ 1 \} \cup T) \\
    &= \, \{ \lambda + (1-\lambda) \tau \mid \lambda \in [0,1], \, \tau \in \conv (T) \} .
\end{align*} In particular, there exist $\lambda \in [0,1]$ and $\tau \in \conv (T)$ with $\chi\vert_{\SL_{2^{n}}(F)} = \lambda + (1-\lambda)\tau$. Of course, there exists some $g \in \SL_{2^{n}}(F) \setminus C$ such that \begin{displaymath}
    \rk_{\M_{2^{n}}(F)}(1-g) \, \leq \, \tfrac{2}{2^{n}} \, \leq \, \delta .
\end{displaymath} Since $\tau\vert_{\SL_{2^{n}}(F)\setminus C} = 0$, we conclude that \begin{displaymath}
    \lambda \, = \, \lambda + (1-\lambda) \Re \tau(g) \, = \, \Re \chi(g) \, \geq \, 1-\tfrac{\epsilon}{2} .
\end{displaymath} Consequently, \begin{align*}
	\Vert 1- {\chi\vert_{\SL_{2^{n}}(F)}} \Vert_{\infty} \, &\leq \, \Vert 1 - \lambda \Vert_{\infty} + \Vert \lambda - {\chi\vert_{\SL_{2^{n}}(F)}} \Vert_{\infty} \\
	& \leq \, 1-\lambda + (1-\lambda)\Vert \tau \Vert_{\infty} \, \leq \, 2-2\lambda \, \leq \, \epsilon .\qedhere
\end{align*} \end{proof}

We now turn to applications of the work of Bader--Boutonnet--Houdayer--Peterson~\cite{BaderBoutonnetHoudayerPeterson} on \emph{charmenability} of certain special linear groups.

\begin{lem}\label{lemma:1} Let $n \in \N$ and consider the discrete group $G \defeq \SL_{n}(\Q)$. If $\phi \in \PD(G)$, then $\cconv \!\left(\phi^{G}\right) \cap \ch(G) \ne \emptyset$. \end{lem}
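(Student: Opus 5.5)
The plan is to realize $\SL_{n}(\Q)$ as a directed union of groups to which the charmenability theorem of Bader--Boutonnet--Houdayer--Peterson~\cite{BaderBoutonnetHoudayerPeterson} applies, and then pass to the union with the compactness argument of Lemma~\ref{lemma:0}. The cases $n \leq 1$ are trivial, since the group is trivial and $\phi = 1$ is already a character. For $n = 2$ the group $\SL_{2}(\Q)$ is not covered by higher-rank results; here we will use that $\SL_{2}(\Z[1/m])$ is a lattice in $\SL_{2}(\R) \times \prod_{p \mid m} \SL_{2}(\Q_{p})$, which has higher rank once $m > 1$, and such $S$-arithmetic lattices fall under the scope of~\cite{BaderBoutonnetHoudayerPeterson}. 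For $n \geq 3$ we may work with $\SL_{n}(\Z[1/m])$ for all $m$ (or just $\SL_{n}(\Z)$-type groups, but the $S$-arithmetic ones are more convenient because they exhaust $\SL_n(\Q)$).

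First, let $\mathcal{H} \defeq \{ \SL_{n}(\Z[1/m]) \mid m \in \N_{\geq 2} \}$. It is directed, since $\SL_{n}(\Z[1/m]) \cup \SL_{n}(\Z[1/m']) \subseteq \SL_{n}(\Z[1/(mm')])$. Its union is all of $G = \SL_{n}(\Q)$, because every matrix has finitely many denominators.

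Second, for each $H \in \mathcal{H}$, we invoke the main theorem of~\cite{BaderBoutonnetHoudayerPeterson}. It states that $H$ is charmenable: for every positive-definite $\psi$ on $H$ with $\psi(1)=1$, the set $\cconv(\psi^{H})$ contains a character. More precisely, charmenability says that every non-empty compact convex $H$-invariant subset of the state space of the group C$^*$-algebra contains a fixed point, i.e.\ a trace. Translated to functions, the pointwise topology on normalized positive-definite functions coincides with the weak$^*$ topology on states of $C^*(H)$, since $H$ is discrete and these functions are bounded. Hence $\cconv((\phi\vert_{H})^{H}) \cap \Delta(H) \neq \emptyset$. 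The main obstacle is this step: checking that the cited theorem applies to exactly these groups, namely that the lattice is irreducible in a product of higher total rank with simply connected simple factors, and matching their notion of charmenability to the condition in Lemma~\ref{lemma:0}. In particular one must note that the ``fixed point property on the state space'' is exactly the statement about $\cconv$ of orbits in $\C^{H}$ with the product topology.

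Third, $G$ is discrete, hence has SIN, and $\phi \in \PD(G)$ is automatically continuous. All hypotheses of Lemma~\ref{lemma:0} hold, with density trivial since the union is all of $G$. Lemma~\ref{lemma:0} therefore gives $\cconv(\phi^{G}) \cap \ch(G) \neq \emptyset$. Alternatively, one could skip Lemma~\ref{lemma:0} and apply~\cite{BaderBoutonnetHoudayerPeterson} directly if it already covers $\SL_{n}(\Q)$ itself; I would check this first, and fall back on the exhaustion above otherwise.
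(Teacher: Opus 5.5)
Your proposal is correct and matches the paper's proof essentially verbatim: the paper also exhausts $\SL_{n}(\Q)$ by the directed family $\{\SL_{n}(\Z[1/m]) \mid m \in \N_{>1}\}$, applies \cite[Theorem~A]{BaderBoutonnetHoudayerPeterson} to each member, and concludes via Lemma~\ref{lemma:0}. The paper leaves three points implicit that you spell out: the trivial cases $n \leq 1$, the role of $m>1$ in meeting the higher-rank (product-type) hypothesis (notably for $n=2$), and the identification of the weak$^{*}$ topology on states with the product topology on $\C^{H}$.
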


\begin{proof} Consider the set $\mathcal{H} \defeq \{\SL_{n}(\Z[1/m]) \mid m \in \N_{>1} \}$ of subgroups of $G$. As \begin{displaymath}
    \SL_{n}(\Z[1/m]) \cup \SL_{n}(\Z[1/m']) \, \subseteq \, \SL_{n}(\Z[1/mm']) 
\end{displaymath} for any two $m,m' \in \N_{>1}$, we see that $(\mathcal{H},{\subseteq})$ is directed. Evidently, $G = \bigcup \mathcal{H}$. Now, if $\phi \in \PD(G)$, then~\cite[Theorem~A]{BaderBoutonnetHoudayerPeterson} asserts that $\cconv \!\left((\phi\vert_{H})^{H}\right) \cap \Delta(H) \ne \emptyset$ for every $H \in \mathcal{H}$, whence $\cconv \!\left(\phi^{G}\right) \cap \ch(G) \ne \emptyset$ thanks to Lemma~\ref{lemma:0}. \end{proof}

\begin{lem}\label{lemma:2} If $\phi \in \PD(\A(\Q))$, then $1 \in \cconv \!\left(\phi^{\A(\Q)}\right)$. \end{lem}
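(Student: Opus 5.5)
The plan is to combine Lemma~\ref{lemma:1} and Lemma~\ref{lemma:0} to produce a character in $\cconv\!\left(\phi^{\A(\Q)}\right)$, and then to invoke Lemma~\ref{lemma:3} to identify that character as the constant function $1$.

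We set $G \defeq \A(\Q)$ and let $\mathcal{H}$ consist of the images of the groups $\SL_{2^{n}}(\Q)$ in $G$ under the inductive limit embeddings, for $n \in \N$. These subgroups form an ascending chain, so $(\mathcal{H},{\subseteq})$ is directed. By Remark~\ref{remark:simple.example} we have $G = \overline{\bigcup \mathcal{H}}$. Moreover, $G$ has SIN by Remark~\ref{remark:rank.topology}, since the rank metric is bi-invariant.

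Now let $\phi \in \PD(G)$ and fix $H \in \mathcal{H}$, so that $H \cong \SL_{2^{n}}(\Q)$ for some $n$.
\begin{itemize}
\item The restriction $\phi\vert_{H}$ is positive-definite and satisfies $\phi\vert_{H}(1) = 1$.
\item Continuity holds trivially if $H$ carries the discrete topology, so $\phi\vert_{H} \in \PD(H)$ for $H$ viewed as a discrete group, as Lemma~\ref{lemma:1} requires.
\item Lemma~\ref{lemma:1} then gives $\cconv\!\left((\phi\vert_{H})^{H}\right) \cap \ch(H) \neq \emptyset$, and in particular $\cconv\!\left((\phi\vert_{H})^{H}\right) \cap \Delta(H) \neq \emptyset$.
\end{itemize}
Thus all three hypotheses of Lemma~\ref{lemma:0} hold. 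Lemma~\ref{lemma:0} yields some $\chi \in \cconv\!\left(\phi^{G}\right) \cap \ch(G)$.

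Since $\Q$ is an infinite field, Lemma~\ref{lemma:3} gives $\ch(\A(\Q)) = \{1\}$. Hence $\chi = 1$, and therefore $1 \in \cconv\!\left(\phi^{\A(\Q)}\right)$.

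The main point requiring care is the bookkeeping of topologies. Lemma~\ref{lemma:1} is stated for $\SL_{n}(\Q)$ as a discrete group, while Lemma~\ref{lemma:0} only asks for the purely algebraic condition $\cconv\!\left((\phi\vert_{H})^{H}\right) \cap \Delta(H) \neq \emptyset$, with closures taken in the product topology on $\C^{H}$. These two requirements are compatible. Continuity in the rank topology is needed only at the end of Lemma~\ref{lemma:0}, and there it is supplied by Lemma~\ref{lemma:continuity} through the SIN property of $G$.
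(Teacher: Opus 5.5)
Your proposal is correct and matches the paper's proof: you take $\mathcal{H} = \{\SL_{2^{n}}(\Q) \mid n \in \N\}$, feed Lemma~\ref{lemma:1} into Lemma~\ref{lemma:0} to obtain a character in $\cconv(\phi^{\A(\Q)})$, and conclude with Lemma~\ref{lemma:3}. Your remarks on the SIN property and on treating $H$ as a discrete group for Lemma~\ref{lemma:1} are accurate, and they make explicit what the paper leaves implicit.
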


\begin{proof} Appealing to Remark~\ref{remark:simple.example}, we identify $\mathcal{H} \defeq \{ \SL_{2^{i}}(\Q) \mid i \in \N \}$ as a chain of subgroups of $G \defeq \A(\Q)$ such that $G = \overline{\bigcup \mathcal{H}}$. Now, if $\phi \in \PD(G)$, then Lemma~\ref{lemma:1} yields that $\cconv \!\left((\phi\vert_{H})^{H}\right) \cap \Delta(H) \ne \emptyset$ for every $H \in \mathcal{H}$, wherefore $\cconv \!\left(\phi^{G}\right) \cap \ch(G) \ne \emptyset$ by Lemma~\ref{lemma:0}. As $\ch(\A(\Q)) = \{ 1 \}$ by Lemma~\ref{lemma:3}, this entails that $1 \in \cconv \!\left(\phi^{\A(\Q)}\right)$. \end{proof}

\begin{lem}\label{lemma:kernel} If $\pi$ is a continuous unitary representation of $\A(\Q)$, then \begin{displaymath}
    \cent(\A(\Q)) \, \subseteq \, \Ker \pi .
\end{displaymath} \end{lem}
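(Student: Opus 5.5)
Let $\pi$ be a continuous unitary representation of $G \defeq \A(\Q)$ on a Hilbert space $H$, and fix $z \in \cent(G)$. By Lemma~\ref{lemma:exoticness.via.positive.definite.functions} it suffices to show $\phi(z) = 1$ for every $\phi \in \PD(G)$. Indeed, each such $\phi$ arises as $g \mapsto \langle \pi'(g)\xi,\xi\rangle$ for some continuous unitary representation $\pi'$ and unit vector $\xi$, so $\phi(z)=1$ forces $\pi'(z)\xi = \xi$. Conversely, applying this to the matrix coefficients of $\pi$ gives $\pi(z)\xi=\xi$ for all unit vectors $\xi \in H$, hence $z \in \Ker \pi$.

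The argument has two steps. First, $z$ is central, so $\phi^{g}(z) = \phi(gzg^{-1}) = \phi(z)$ for every $g \in G$. Hence the evaluation functional $\psi \mapsto \psi(z)$, which is linear and continuous on $\C^{G}$ for the product topology, is identically $\phi(z)$ on $\phi^{G}$, and therefore also on $\conv(\phi^{G})$ and its closure $\cconv(\phi^{G})$. Second, Lemma~\ref{lemma:2} gives $1 \in \cconv(\phi^{G})$, and evaluating the constant function $1$ at $z$ yields $\phi(z) = 1$, which is what we need.

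I expect no serious obstacle here: all the analytic content is already packaged in Lemma~\ref{lemma:2}. The one point to keep straight is that the constant function $1$ is meant as an element of $\C^{G}$ (the trivial character), so that evaluation at $z$ is legitimate. One could also describe $\cent(G)$ explicitly via Proposition~\ref{proposition:center} and Remark~\ref{remark:simple.example} as $\Q\setminus\{0\}$, but the argument above does not need this.
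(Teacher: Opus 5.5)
Your proposal is correct and follows the paper's proof. Both use Lemma~\ref{lemma:2} to get $1 \in \cconv(\phi^{G})$, note that centrality of $z$ makes every element of $\conv(\phi^{G})$ take the value $\phi(z)$ at $z$, and conclude via Lemma~\ref{lemma:exoticness.via.positive.definite.functions}; the paper passes to the closure by an explicit $\epsilon$-approximation rather than by continuity of the evaluation functional, which amounts to the same thing.
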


\begin{proof} According to Lemma~\ref{lemma:exoticness.via.positive.definite.functions}, it suffices to check that \begin{displaymath}
    \forall \phi \in \PD(A(\Q)) \ \forall h \in \cent(\A(\Q)) \ \forall \epsilon \in \R_{>0} \colon \quad \vert 1 - \phi(h) \vert \leq \epsilon .
\end{displaymath} To this end, let $\phi \in \PD(A(\Q))$. Then $1 \in \cconv \!\left(\phi^{\A(\Q)}\right)$ thanks to Lemma~\ref{lemma:2}. Hence, if $h \in \cent(\A(\Q))$ and $\epsilon \in \R_{>0}$, then there exist a finite set~$F \subseteq \A(\Q)$ and $\mu \in [0,1]^{F}$ such that $\sum_{g \in F} \mu(g) = 1$ and $\left\lvert 1 - \sum\nolimits_{g \in F} \mu(g)\phi^{g}(h) \right\rvert \leq \epsilon$, wherefore \begin{displaymath}
    \vert 1 - \phi(h) \vert \, \stackrel{h \in \cent(\A(\Q))}{=} \, \left\lvert 1 - \sum\nolimits_{g \in F} \mu(g)\phi^{g}(h) \right\rvert \, \leq \, \epsilon .\qedhere
\end{displaymath} \end{proof}

\section{Proof of the main result}\label{section:main.result}

This section contains the proofs of Theorem~\ref{theorem:exotic} and Corollaries~\ref{corollary:whirly} and~\ref{corollary:separable.reps}.

\begin{proof}[Proof of Theorem~\ref{theorem:exotic}] Let $\pi$ be a continuous unitary representation of $\GL(R)$. Let $F$ denote the prime field of $\cent(R)$. By case analysis, we show that \begin{displaymath}
    \Ker \pi \, \nsubseteq \, \cent(R) .
\end{displaymath}

\emph{Case 1}: $F$ is finite. By Lemma~\ref{lemma:embedding} and the fact that $\M_{\infty}(F)$ naturally embeds into $\M_{\infty}(\cent(R))$, there exists a unital ring embedding $\iota \colon \M_{\infty}(F) \to R$, which necessarily satisfies ${\rk_{R}} \circ \iota = {\rk_{\M_{\infty}(F)}}$ by Remark~\ref{remark:rank.function} and thus restricts to an embedding of the topological group $\A(F)$ into $\GL(R)$. As $\A(F)$ is exotic due to~\cite{CarderiThom}, the continuous unitary representation $\pi \circ {\iota\vert_{\A(F)}}$ is trivial, i.e., $\iota(\A(F)) \subseteq \Ker \pi$. Containing a subgroup isomorphic to $\SL_{2}(F)$, the group $\iota(\A(F))$ is non-abelian and hence not contained in $\cent(R)$, which entails that $\Ker \pi \nsubseteq \cent(R)$.

\emph{Case 2}: $F$ is infinite, i.e., $F \cong \Q$. As $R$ is not discrete, in particular there exist $z,z' \in R$ with $d_{R}(z,z') \notin \{ 0,1 \}$. Let $x \defeq z-z'$. By regularity of $R$, there exists $y \in R$ such that $xyx = x$. Straightforward calculations show that $e \defeq xy \in \E(R)$ and $\rk_{R}(e) = \rk_{R}(x) = \rk_{R}(z-z') = d_{R}(z,z') \notin \{ 0,1 \}$. Consequently, $e \in \E(R) \setminus \{ 0,1 \}$. Since $e \ne 0$, we know that $S \defeq eRe$ is a non-discrete irreducible, continuous ring with $\cent(S) = \cent(R)e \cong \cent(R)$ (see, e.g.,~\cite[Remark~4.8]{BernardSchneider}). Hence, due to Lemma~\ref{lemma:embedding} and the fact that $\M_{\infty}(F)$ naturally embeds into $\M_{\infty}(\cent(R))$, there exists a unital $F$-algebra embedding $\iota \colon \M_{\infty}(F) \to S$. Moreover, ${\rk_{S}} \circ \iota = {\rk_{\M_{\infty}(F)}}$ thanks to Remark~\ref{remark:rank.function}. In turn, the restriction ${\iota\vert_{\A(F)}} \colon \A(F) \to \GL(S)$ constitutes an embedding of topological groups. Furthermore, by~\cite[Lemma~6.1]{Schneider25a}, \begin{displaymath}
    \phi \colon \, \GL(S) \, \longrightarrow \, \GL(R), \quad a \, \longmapsto \, a + 1 - e
\end{displaymath} is an embedding of topological groups, too. Thus, ${\pi'} \defeq \pi \circ \phi \circ {\iota\vert_{\A(F)}}$ is a continuous unitary representation of $\A(F)$. Therefore, \begin{displaymath}
    F\setminus \{ 0 \} \, \stackrel{\ref{remark:simple.example}}{\cong}  \, \cent(\M_{\infty}(F)) \setminus \{ 0 \} \, \stackrel{\ref{proposition:center}}{=} \, \cent(\A(F)) \, \stackrel{\ref{lemma:kernel}}{\subseteq} \, \Ker {\pi'} ,
\end{displaymath} and so \begin{displaymath}
    T \, \defeq \, \{ ae + 1 - e \mid a \in F \setminus \{ 0 \} \} \, = \, \phi(\iota(\cent(\A(F)))) \, \subseteq \, \Ker \pi .
\end{displaymath} As $R$ is an irreducible unital ring, we know that \begin{displaymath}
    \E(R) \cap \cent(R) \, = \, \{ 0,1\}
\end{displaymath} (see, e.g.,~\cite[VI.1, Satz~1.10, p.~139]{MaedaBook}). Due to~\cite[II.II, Lemma~2.8, p.~76]{VonNeumannBook}, \begin{displaymath}
    \E(R) \cap \cent(R) \, = \, \{ f \in \E(R) \mid fR(1-f) = \{ 0 \} \} .
\end{displaymath} Since $e \notin \{ 0,1 \}$, we conclude that $eR(1-e) \ne \{ 0 \}$. Fix any $b \in eR(1-e) \setminus \{ 0 \}$. Now, if $a \in F \setminus \{ 0 \}$ is such that $tb = bt$ for $t \defeq ae+1-e \in T$, then \begin{displaymath}
    ab \, = \, aeb+(1-e)b \, = \, tb \, = \, bt \, = \, bae+b(1-e) \, = \, abe+b(1-e) \, = \, b
\end{displaymath} and thus $(1-a)b = 0$, which due to $b \ne 0$ and $F$ being a field implies that $1-a = 0$, i.e., $a=1$. As $F \setminus \{ 0,1 \} \ne \emptyset$, hence $T \nsubseteq \cent(R)$, so $\Ker \pi \nsubseteq \cent(R)$.

By~\cite[Theorem~A]{Schneider25a}, since $\Ker \pi$ is a closed normal subgroup of $\GL(R)$ not contained in $\cent(R)$, it follows that $\Ker \pi = \GL(R)$, i.e., $\pi$ is trivial. \end{proof}

\begin{remark} While the proof of Theorem~\ref{theorem:exotic} given above uses the exoticness result of~\cite{CarderiThom}, the former actually suggests a simplified proof of the latter. In order to sketch this proof, let $F$ be a finite field. Then $\ch(\A(F)) = 1$ by the argument suggested in~\cite[Remark~1.3]{CarderiThom}. From this and Lemma~\ref{lemma:0}, applied to the set $\mathcal{H} \defeq \{ \SL_{2^{n}}(F) \mid n \in \N \}$ of finite subgroups of $\A(F)$, we infer that $1 \in \cconv \!\left(\phi^{\A(F)}\right)$ for every $\phi \in \PD(\A(F))$. Hence, as in Lemma~\ref{lemma:kernel}, the kernel of every continuous unitary representation of $\A(F)$ contains $\cent(\A(F))$. Arguing as in \emph{Case 2} of the proof of Theorem~\ref{theorem:exotic}, we conclude that $\A(F)$ is exotic if $\vert F \vert > 2$. To cover remaining case where $F \cong \F_{2}$, one may use a unital ring embedding $\F_{4} \hookrightarrow \M_{2}(\F_{2})$ to construct a unital ring embedding \begin{displaymath}
	\lim\nolimits_{i\to \infty} \M_{2^{i}}(\F_{4}) \, \lhook\joinrel\longrightarrow \, \lim\nolimits_{i\to \infty} \M_{2^{i}}(\M_{2}(\F_{2})) \, \cong \, \lim\nolimits_{i\to \infty} \M_{2^{i}}(\F_{2}) ,
\end{displaymath} which is necessarily compatible with the rank functions thanks to Remark~\ref{remark:rank.function} and thus extends to a unital ring embedding $\M_{\infty}(\F_{4}) \hookrightarrow \M_{\infty}(\F_{2})$. In turn, the topological group $\A(\F_{4})$ embeds into $\A(\F_{2})$. Since $\A(\F_{4})$ is non-abelian and exotic by the preceding argument, $\A(\F_{2})$ is exotic by~\cite[Proposition~3.2]{CarderiThom}. \end{remark}

Before turning to the announced corollaries, let us briefly clarify the connection between exoticness and whirlyness.

\begin{remark}[{\cite[Remark~5.7]{SchneiderSolecki}}]\label{remark:whirly} Let $G$ be a topological group acting continuously on a compact Hausdorff space $X$ and let $\mu$ be a $G$-invariant regular Borel probability measure on $X$. Then the support of $\mu$ is contained in the set of $G$-fixed points in $X$ if and only if the induced continuous homomorphism $\pi \colon G \to \U(L^{2}(\mu))$ is trivial. The forward implication is obvious. Conversely, if the support of $\mu$ is not contained in the set of $G$-fixed points, then we find an open set $U \subseteq X$ with $\mu(U) > 0$ and some $g \in G$ such that $U \cap gU = \emptyset$, whence $\Vert \chi_{U} - \pi(g)\chi_{U} \Vert^{2}_{2,\mu} = 2\mu(U) > 0$ and so $\pi \ne 1$. In particular, exoticness implies whirlyness. The converse does not hold: for example, the unitary group of an infinite-dimensional Hilbert space equipped with the strong operator topology is whirly~\cite[Theorem~1.1]{GlasnerTsirelsonWeiss}, but not exotic. \end{remark}

\begin{proof}[Proof of Corollary~\ref{corollary:whirly}] This follows from Theorem~\ref{theorem:exotic} and Remark~\ref{remark:whirly}. \end{proof}

\begin{proof}[Proof of Corollary~\ref{corollary:separable.reps}] Let $H$ be a separable Hilbert space. Since the strong operator topology constitutes a separable group topology on $\U(H)$ (see, e.g.,~\cite[I.9, Example~9.B(6), p.~59]{KechrisBook}), any homomorphism $\pi \colon \GL(R) \to \U(H)$ must be continuous by~\cite[Theorem~1.3]{BernardSchneider} and thus trivial by Theorem~\ref{theorem:exotic}. \end{proof}

\section{A question about amenability}\label{section:open.problem}

As mentioned already in Section~\ref{section:introduction}, exoticness may be viewed as a partial cause for extreme amenability: indeed, if a topological group is both exotic and amenable\footnote{A topological group $G$ is called \emph{amenable} if every continuous action of $G$ on a non-empty compact Hausdorff space admits a $G$-invariant regular Borel probability measure.}, then it is extremely amenable~\cite[Remark~5.7]{SchneiderSolecki}. Among topological unit groups of non-discrete irreducible, continuous rings, there is no unanimity concerning (extreme) amenability. Thanks to~\cite{CarderiThom}, the topological group $\A(F) = \GL(\M_{\infty}(F))$ is extremely amenable for every finite field $F$. On the other hand, as established in~\cite{SchneiderIMRN}, if $G$ is a group not \emph{inner amenable} in the sense of~\cite{effros} (e.g., a non-abelian free group), then the unit group of the ring of operators affiliated with the group von Neumann algebra of $G$ is non-amenable with respect to the relative rank topology. We amplify the following intriguing open question from the literature.

\begin{problem}[{\cite[Remark~5.12(1)]{SchneiderIMRN}}] Is the topological group $\A(\Q)$ amenable? Equivalently, is it extremely amenable? \end{problem}

\end{document}